\documentclass[12pt,a4paper]{article}

\usepackage{amsmath,amssymb,amsthm}
\usepackage{graphicx}
\usepackage{hyperref}
\usepackage{cite}
\usepackage{mathrsfs}
\usepackage{bm}
\usepackage{algorithm}
\usepackage{algpseudocode}
\usepackage{booktabs}
\usepackage{geometry}
\usepackage{color}
\usepackage{setspace}
\usepackage{enumitem}
\usepackage{fancyhdr}

\newtheorem{theorem}{Theorem}
\newtheorem{lemma}[theorem]{Lemma}
\newtheorem{proposition}[theorem]{Proposition}
\newtheorem{corollary}[theorem]{Corollary}
\newtheorem{definition}{Definition}
\newtheorem{remark}{Remark}
\newtheorem{assumption}{Assumption}

\DeclareMathOperator{\diag}{diag}

\fancypagestyle{firstpage}{
    \fancyhf{}

    \fancyfoot[L]{\footnotesize *Corresponding author: \texttt{aalsammani@desu.edu}}
}
\title{\Large Adaptive Control and Mittag-Leffler Stability of Caputo Fractional Systems with State-Dependent Delays}

\author{
    Abdallah Alsammani$^{1,*}$ and Gassan Farah$^{2}$\\[0.8em]
\small $^{1}$Division of Physics, Engineering, Mathematics, and Computer Science, \\
\small Delaware State University, Dover, DE, USA \\[0.3em]
\small $^{2}$Department of Mathematics and Applied Mathematics,\\
\small University of Western Cape, Cape Town, South Africa
}

\date{}

\begin{document}

\maketitle
\thispagestyle{firstpage}

\begin{abstract}
This paper establishes new sufficient conditions for Mittag-Leffler stability of Caputo fractional-order nonlinear systems with state-dependent delays. The central analytical tool is a class of Lyapunov-Krasovskii functionals incorporating singular kernels of the form $\xi^{\alpha-1}$ for $\alpha \in (0,1)$, which couple fractional memory effects with delay-induced dynamics in a unified framework. We prove that the resulting stability conditions reduce to computationally tractable linear matrix inequalities and derive explicit formulas for the maximum tolerable delay perturbation. Building on this stability foundation, we design an adaptive controller governed by fractional-order parameter update laws with $\sigma$-modification and a filter-based delay estimation mechanism that circumvents the need for classical state derivatives, which may not exist for fractional-order trajectories. The convergence analysis establishes ultimate boundedness of the closed-loop system with a computable bound that vanishes as the regularization parameters approach zero. Numerical validation on a three-neuron fractional Hopfield network with state-dependent transmission delays demonstrates that the proposed adaptive scheme reduces cumulative control energy by 99.3\% and achieves an asymptotic state error two orders of magnitude smaller than a comparable fractional sliding mode controller.

\medskip
\noindent\textbf{Keywords:} Caputo fractional derivative; Mittag-Leffler stability; state-dependent delay; adaptive control; singular Lyapunov-Krasovskii functional; linear matrix inequality
\end{abstract}

\section{Introduction}

Fractional-order differential equations have become a standard modeling tool for systems in which memory and hereditary effects are essential, including viscoelastic response \cite{mainardi2010}, anomalous transport in complex media \cite{metzler2000}, electrochemical dynamics in energy storage devices \cite{sabatier2015}, and long-range dependent phenomena in network traffic \cite{sun2018}. A central feature is the nonlocal nature of fractional operators: in the Caputo formulation, the state evolution at time $t$ depends on the entire past trajectory through an integral operator with a weakly singular kernel. This structure provides a mathematically principled representation of multi-scale temporal correlations and physically meaningful initial conditions, and it is now well documented in the foundational monographs of fractional calculus \cite{podlubny1999,kilbas2006}.

Time delays constitute another major mechanism through which finite-dimensional models acquire infinite-dimensional behavior. In many applications, delays arise from signal transmission, sensing, and computation, and their destabilizing impact is well understood for integer-order systems \cite{gu2003,fridman2014}. When fractional memory is combined with delayed feedback, the resulting dynamics depend on two distinct forms of history: the weighted past encoded by the fractional derivative and the pointwise evaluation at a retarded argument. This interplay is particularly relevant in neural and networked systems, where latencies may vary with the activity level and communication protocols adapt timing in response to system states \cite{kaslik2012,wang2015}. Such state-dependent delays introduce additional analytical difficulty because the delay enters implicitly through the solution trajectory, which complicates both stability analysis and control synthesis.

The stability theory of fractional-order systems has advanced substantially, beginning with eigenvalue-based criteria for linear commensurate models \cite{matignon1996} and developing toward Lyapunov-based frameworks that capture the natural decay behavior of fractional dynamics. In this context, Mittag-Leffler stability provides the appropriate analogue of exponential stability and has been established as a fundamental concept for fractional nonlinear systems \cite{li2009}. Lyapunov methods for fractional-order models have been developed and refined in several directions, including general quadratic Lyapunov functions and extensions to broader classes of fractional dynamics \cite{aguila2014,duarte2015}. For delayed fractional systems, existing analyses have addressed constant-delay settings and related neural network models, and have also proposed energy-based Lyapunov interpretations \cite{chen2012delay,liu2016,trigeassou2011}. Nevertheless, the mechanisms that couple fractional memory with delays whose magnitude varies along the state trajectory are not fully captured by standard approaches. In particular, Lyapunov--Krasovskii constructions that are effective in the integer-order delay literature typically employ smooth integral terms, whereas fractional dynamics naturally suggest kernels that mirror the singular memory structure of the Caputo operator. At the same time, adaptive compensation of delay effects in fractional-order systems raises a subtle regularity issue: trajectories of Caputo fractional differential equations are generally only H\"older continuous of order $\alpha\in(0,1)$ and may not admit classical derivatives, which limits the applicability of delay estimation or compensation mechanisms that rely on $\dot{x}(t)$ \cite{diethelm2010}.

Motivated by these considerations, this paper develops a stability and control framework for Caputo fractional-order nonlinear systems with state-dependent delays. We introduce a class of Lyapunov--Krasovskii functionals that incorporate a singular kernel matched to the fractional order, and we use this construction to derive sufficient conditions for Mittag-Leffler stability that reduce to computationally tractable linear matrix inequalities. The resulting analysis yields an explicit delay robustness margin expressed in closed form. Building on the stability results, we design an adaptive controller with fractional-order parameter update laws equipped with $\sigma$-modification, together with a filter-based delay estimation mechanism that avoids reliance on classical state derivatives and is compatible with the H\"older regularity of fractional trajectories. The theory is validated numerically on a three-neuron fractional Hopfield network with state-dependent transmission delays, where the proposed adaptive scheme achieves substantially reduced control effort and markedly improved asymptotic tracking accuracy relative to a comparable fractional sliding mode controller.

\textit{Notation.} $\|\cdot\|$ denotes the Euclidean norm for vectors and the spectral norm for matrices. For a symmetric matrix $M$, $\lambda_{\min}(M)$ and $\lambda_{\max}(M)$ denote its extreme eigenvalues. The space $C^1([-\bar{\tau},0];\mathbb{R}^n)$ is equipped with the supremum norm $\|\phi\|_{\infty}=\sup_{\theta\in[-\bar{\tau},0]}\|\phi(\theta)\|$.

\section{Mathematical Preliminaries}
\label{sec:prelim}

\subsection{Fractional Calculus}

We restrict attention to Caputo fractional derivatives of order $\alpha \in (0,1)$, which requires only first-order differentiability and permits physically meaningful initial conditions \cite{podlubny1999}.

\begin{definition}[Riemann-Liouville Fractional Integral \cite{kilbas2006}]
For $f \in L^1[t_0, T]$ and $\alpha > 0$,
\begin{equation}
    I^{\alpha}_{t_0} f(t) = \frac{1}{\Gamma(\alpha)} \int_{t_0}^{t} (t-s)^{\alpha-1} f(s) \, ds, \quad t > t_0.
\end{equation}
\end{definition}

\begin{definition}[Caputo Fractional Derivative \cite{podlubny1999}]
For an absolutely continuous $f: [t_0, T] \to \mathbb{R}$ and $\alpha \in (0,1)$,
\begin{equation}
    {}^{C}\!D^{\alpha}_{t_0} f(t) = \frac{1}{\Gamma(1-\alpha)} \int_{t_0}^{t} (t-s)^{-\alpha} f'(s) \, ds = I^{1-\alpha}_{t_0} f'(t).
\end{equation}
\end{definition}

\begin{definition}[Mittag-Leffler Function \cite{gorenflo2014}]
The two-parameter Mittag-Leffler function is $E_{\alpha,\beta}(z) = \sum_{k=0}^{\infty} z^k / \Gamma(\alpha k + \beta)$ for $\alpha, \beta > 0$ and $z \in \mathbb{C}$. The shorthand $E_{\alpha}(z) = E_{\alpha,1}(z)$ satisfies the asymptotic expansion
\begin{equation}
\label{eq:ml_asymptotic}
    E_{\alpha}(-t^{\alpha}) = \frac{t^{-\alpha}}{\Gamma(1-\alpha)} + O(t^{-2\alpha}) \quad \text{as } t \to \infty, \quad 0 < \alpha < 1.
\end{equation}
\end{definition}

\begin{lemma}[Caputo Derivative of Quadratic Forms {\cite{aguila2014}}]
\label{lem:caputo_inequality}
If $x: [t_0, T] \to \mathbb{R}^n$ is continuously differentiable and $P \succ 0$, then
\begin{equation}
    {}^{C}\!D^{\alpha}_{t_0} \!\left( x^{\!\top}\! P x \right) \leq 2 x^{\!\top} P \, {}^{C}\!D^{\alpha}_{t_0} x, \quad \alpha \in (0,1).
\end{equation}
\end{lemma}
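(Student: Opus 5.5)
The plan is to prove the equivalent statement that the difference
\[
\Delta(t) := 2x(t)^{\!\top} P\, {}^{C}\!D^{\alpha}_{t_0} x(t) - {}^{C}\!D^{\alpha}_{t_0}\!\left(x^{\!\top} P x\right)(t)
\]
is nonnegative for every $t\in(t_0,T]$. Because $x$ is $C^1$, the definition of the Caputo derivative applies to both $x$ and $x^{\top}Px$. Since $P$ is symmetric we have $\frac{d}{ds}\big(x^{\top}Px\big)=2x^{\top}P\dot x$, and therefore
\[
\Delta(t)=\frac{1}{\Gamma(1-\alpha)}\int_{t_0}^{t}(t-s)^{-\alpha}\,2\big(x(t)-x(s)\big)^{\!\top}P\,\dot x(s)\,ds .
\]
Fix $t$ and introduce the auxiliary function $y(s)=x(s)-x(t)$ on $[t_0,t]$. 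It satisfies $\dot y=\dot x$ and $y(t)=0$. The integrand then becomes $-(t-s)^{-\alpha}\,2y^{\top}P\dot y = -(t-s)^{-\alpha}\frac{d}{ds}\big(y^{\top}Py\big)$. This reduces the matrix problem to a scalar one for the nonnegative function $\phi(s)=y(s)^{\top}Py(s)$, which vanishes at $s=t$.

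Next, integrate by parts on $[t_0,t-\varepsilon]$ and let $\varepsilon\to0^+$:
\[
-\int_{t_0}^{t-\varepsilon}(t-s)^{-\alpha}\phi'(s)\,ds
= -\Big[(t-s)^{-\alpha}\phi(s)\Big]_{t_0}^{t-\varepsilon} + \alpha\int_{t_0}^{t-\varepsilon}(t-s)^{-\alpha-1}\phi(s)\,ds .
\]
The boundary term at $t_0$ equals $(t-t_0)^{-\alpha}\phi(t_0)\ge0$. The remaining integral has a nonnegative integrand because $P\succ0$. Together these give $\Delta(t)\ge0$, which is the claimed inequality.

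The main obstacle is justifying the limit at the singular endpoint $s=t$. Two things must be checked. First, the boundary term $\varepsilon^{-\alpha}\phi(t-\varepsilon)$ must vanish. Second, $(t-s)^{-\alpha-1}\phi(s)$ must be integrable near $s=t$. Both follow from the $C^1$ hypothesis. Indeed, $\|y(s)\|\le L\,|t-s|$ with $L=\max\|\dot x\|$, so
\[
\phi(s)\le\lambda_{\max}(P)\,L^2\,(t-s)^2 .
\]
Hence the boundary term is $O(\varepsilon^{2-\alpha})\to0$, and the integrand is $O\big((t-s)^{1-\alpha}\big)$, which is integrable. This step is exactly where the smoothness assumption is used, and it is what fails for merely Hölder-continuous trajectories. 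At $t=t_0$ both sides vanish, so there is nothing to prove there.
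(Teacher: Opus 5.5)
Your argument is correct. It is essentially the standard proof in the source the paper cites for this lemma; the paper itself gives no proof and only cites it. The steps are the same: write the difference as one integral of $2\big(x(t)-x(s)\big)^{\top}P\,\dot x(s)\,(t-s)^{-\alpha}$, substitute $y(s)=x(s)-x(t)$, integrate by parts, and observe that the boundary term at $t_0$ and the remaining weakly singular integral are both nonnegative. The only difference is at the endpoint $s=t$: you use the bound $y^{\top}Py=O\big((t-s)^2\big)$, which follows from $x\in C^1$, in place of the L'H\^opital-type limit argument used there, and this is slightly cleaner.
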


\begin{lemma}[Fractional Comparison Principle {\cite{li2010}}]
\label{lem:comparison}
If $V: [t_0, \infty) \to \mathbb{R}_{\geq 0}$ is continuously differentiable and ${}^{C}\!D^{\alpha}_{t_0} V(t) \leq -\gamma V(t)$ for $\gamma > 0$, then $V(t) \leq V(t_0) E_{\alpha}(-\gamma (t-t_0)^{\alpha})$ for all $t \geq t_0$.
\end{lemma}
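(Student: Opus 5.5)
The approach is the standard comparison argument. First remove the inequality by turning it into an equality with a nonnegative forcing term, then solve the resulting linear fractional equation explicitly, and finally drop the forcing term by a positivity argument.

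\emph{Step 1: slack function.} Define $M(t) := {}^{C}\!D^{\alpha}_{t_0} V(t) + \gamma V(t)$. By hypothesis $M(t) \le 0$ for all $t \ge t_0$. Because $V$ is continuously differentiable, ${}^{C}\!D^{\alpha}_{t_0} V = I^{1-\alpha}_{t_0} V'$ is continuous on $[t_0,\infty)$, so $M$ is continuous as well. Then $V$ solves the linear Caputo initial value problem
\begin{equation*}
{}^{C}\!D^{\alpha}_{t_0} V(t) = -\gamma V(t) + M(t), \qquad V(t_0) \text{ given.}
\end{equation*}

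\emph{Step 2: variation of constants.} Apply $I^{\alpha}_{t_0}$ to both sides. Since $I^{\alpha}_{t_0}\,{}^{C}\!D^{\alpha}_{t_0}V = V - V(t_0)$ for absolutely continuous $V$, this gives a linear Volterra equation of the second kind. Its unique continuous solution can be written with the Laplace transform, or by Picard iteration that sums the Neumann series:
\begin{equation*}
V(t) = V(t_0)\,E_{\alpha}\bigl(-\gamma (t-t_0)^{\alpha}\bigr) + \int_{t_0}^{t} (t-s)^{\alpha-1} E_{\alpha,\alpha}\bigl(-\gamma (t-s)^{\alpha}\bigr) M(s)\, ds .
\end{equation*}
To justify this representation, substitute it back and use the term-by-term identity $I^{\alpha}\bigl[s^{\alpha-1}E_{\alpha,\alpha}(-\gamma s^{\alpha})\bigr]$. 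Uniqueness holds because the kernel $(t-s)^{\alpha-1}$ is integrable and the equation is linear, so a Gronwall-type estimate for weakly singular kernels applies.

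\emph{Step 3: sign of the kernel (main obstacle).} The conclusion follows once we know the convolution term is nonpositive. That requires $E_{\alpha,\alpha}(-z) \ge 0$ for $z \ge 0$ and $0 < \alpha \le 1$. This is the only nontrivial analytic input. I would cite complete monotonicity of $E_{\alpha}(-z)$ for $0<\alpha\le 1$ (Pollard's theorem, found in \cite{gorenflo2014}). Differentiating that identity gives $\tfrac{d}{dt}E_{\alpha}(-\gamma t^{\alpha}) = -\gamma t^{\alpha-1}E_{\alpha,\alpha}(-\gamma t^{\alpha})$, and since the left side is $\le 0$, the kernel $t^{\alpha-1}E_{\alpha,\alpha}(-\gamma t^{\alpha})$ is nonnegative.

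Combining this with $M \le 0$, the integral term is $\le 0$, so $V(t) \le V(t_0)E_{\alpha}(-\gamma(t-t_0)^{\alpha})$ for all $t \ge t_0$. The nonnegativity of $V$ is not needed for the inequality itself; it only guarantees that the bound is meaningful.
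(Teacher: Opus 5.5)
Your proof is correct. It is essentially the argument of the cited source \cite{li2010}; the paper itself gives no proof, only the citation. As there, you write ${}^{C}\!D^{\alpha}_{t_0}V=-\gamma V+M$ with $M\le 0$, represent $V$ through the Mittag-Leffler resolvent, and discard the convolution term because $t^{\alpha-1}E_{\alpha,\alpha}(-\gamma t^{\alpha})\ge 0$. Your version is, if anything, slightly more careful: solving the Volterra equation directly avoids the growth assumptions implicit in the Laplace-transform step, and you justify the kernel's sign via Pollard's complete monotonicity rather than simply asserting it.
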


\subsection{System Formulation and Well-Posedness}

We consider fractional-order nonlinear systems with state-dependent delays:
\begin{equation}
\label{eq:system}
    {}^{C}\!D^{\alpha}_{t_0} x(t) = f(x(t)) + g(x(t-\tau(x(t)))) + B u(t), \quad t > t_0,
\end{equation}
with initial condition $x(t) = \phi(t-t_0)$ for $t \in [t_0 - \bar{\tau},\, t_0]$, where $x(t) \in \mathbb{R}^n$, $u(t) \in \mathbb{R}^m$, $B \in \mathbb{R}^{n \times m}$, $\alpha \in (0,1)$, and $\phi \in C^1([-\bar{\tau}, 0]; \mathbb{R}^n)$.

\begin{assumption}[Lipschitz Continuity]
\label{ass:lipschitz}
$f(0) = g(0) = 0$ and $\|f(x) - f(y)\| \leq L_f \|x - y\|$, $\|g(x) - g(y)\| \leq L_g \|x - y\|$ for all $x, y \in \mathbb{R}^n$.
\end{assumption}

\begin{assumption}[Delay Regularity]
\label{ass:delay_lipschitz}
$\tau: \mathbb{R}^n \to [0, \bar{\tau}]$ with $|\tau(x) - \tau(y)| \leq L_{\tau} \|x - y\|$ for all $x, y$.
\end{assumption}

\begin{assumption}[Initial Function Regularity]
\label{ass:initial_regularity}
$\phi \in C^1([-\bar{\tau}, 0]; \mathbb{R}^n)$ with $\|\phi'\|_{\infty} < \infty$.
\end{assumption}

\begin{theorem}[Existence and Uniqueness]
\label{thm:existence}
Under Assumptions~\ref{ass:lipschitz}--\ref{ass:initial_regularity}, for any locally bounded measurable $u$, the initial value problem admits a unique continuous solution $x \in C([t_0 - \bar{\tau}, \infty); \mathbb{R}^n)$.
\end{theorem}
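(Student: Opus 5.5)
We recast \eqref{eq:system} as the equivalent Volterra integral equation
\[
x(t)=\phi(0)+\frac{1}{\Gamma(\alpha)}\int_{t_0}^{t}(t-s)^{\alpha-1}\Bigl[f(x(s))+g\bigl(x(s-\tau(x(s)))\bigr)+Bu(s)\Bigr]ds,\qquad t\ge t_0,
\]
with $x=\phi(\cdot-t_0)$ on $[t_0-\bar\tau,t_0]$. We then obtain a fixed point of the associated operator $\mathcal{T}$ on a space of continuous functions. The difficulty specific to state-dependent delays is that the map $x\mapsto x(s-\tau(x(s)))$ is \emph{not} Lipschitz on $C$. If $s-\tau(x(s))$ and $s-\tau(y(s))$ differ, then bounding $\|x(s-\tau(x(s)))-x(s-\tau(y(s)))\|$ requires a modulus of continuity of $x$ itself. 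So we work on a set of functions with a uniform Lipschitz bound, and this is exactly where Assumption~\ref{ass:initial_regularity} ($\|\phi'\|_\infty<\infty$) enters on the initial segment.

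\textbf{Step 1 (local existence).} Fix $T>t_0$ and $M=\sup_{[t_0,T]}\|u\|$. For small $h>0$, consider the closed convex set $\mathcal{K}_h$ of continuous $x$ on $[t_0-\bar\tau,t_0+h]$ that agree with $\phi$ on the initial interval and satisfy $\|x(t)-\phi(0)\|\le 1$. By Assumption~\ref{ass:lipschitz} the integrand is bounded on $\mathcal{K}_h$. Hence $\mathcal{T}(\mathcal{K}_h)$ is uniformly bounded and equicontinuous with Hölder modulus $C|t-t'|^{\alpha}$, using the standard estimate $\int|(t-s)^{\alpha-1}-(t'-s)^{\alpha-1}|ds\le 2|t-t'|^\alpha/\alpha$. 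Continuity of $\mathcal{T}$ follows from continuity of $\tau$ and $g$ together with uniform continuity of $x$ on compacts, via dominated convergence. For small $h$ we have $\mathcal{T}(\mathcal{K}_h)\subset\mathcal{K}_h$, and Schauder's theorem gives a solution.

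\textbf{Step 2 (uniqueness).} This is the main obstacle. Given two solutions $x,y$, write
\[
g(x(s-\tau(x(s))))-g(y(s-\tau(y(s)))) = \bigl[g(x(s-\tau(x(s))))-g(x(s-\tau(y(s))))\bigr]+\bigl[g(x(s-\tau(y(s))))-g(y(s-\tau(y(s))))\bigr].
\]
The second bracket is bounded by $L_g\sup_{r\le s}\|x(r)-y(r)\|$. The first bracket is bounded by $L_g\,\mathrm{Lip}(x)\,L_\tau\|x(s)-y(s)\|$ whenever both retarded arguments lie in the initial interval, where $x=\phi$ is Lipschitz. When a retarded argument lies beyond $t_0$, only the $\alpha$-Hölder bound is available, so we first try to exploit the fact that solutions are locally Lipschitz away from $t_0$. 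Differentiating the Volterra equation shows $x'(t)=O((t-t_0)^{\alpha-1})$, which is integrable, so $\|x(a)-x(b)\|\le C|a-b|\,\cdot$ (a locally bounded factor) for $a,b\ge t_0+\delta$. Near $t_0$, the retarded arguments $s-\tau(\cdot)$ lie in $[t_0-\bar\tau,t_0]$ whenever $\tau\ge s-t_0$. In the remaining degenerate case (small delay near $t_0$) we accept a weaker Hölder estimate combined with a singular Gronwall argument. Altogether we obtain
\[
\|x(t)-y(t)\|\le K\int_{t_0}^t (t-s)^{\alpha-1}\sup_{r\le s}\|x(r)-y(r)\|\,ds,
\]
and the Henry--Gronwall inequality for weakly singular kernels (equivalently, a Bielecki-type weighted norm $\sup e^{-\lambda t}\|\cdot\|$ making $\mathcal{T}$ a contraction) forces $x\equiv y$. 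If the degenerate case resists, the fallback is to prove uniqueness in the class of solutions with $\|x(t)-x(t')\|\le C|t-t'|^{\alpha}$ whose delay satisfies the usual non-degeneracy condition; since the statement asserts full uniqueness, we will try the locally Lipschitz route first.

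\textbf{Step 3 (global continuation).} By the linear growth $\|f(x)\|+\|g(x)\|\le (L_f+L_g)\|x\|$ from Assumption~\ref{ass:lipschitz}, any solution on $[t_0,T)$ satisfies
\[
\sup_{r\le t}\|x(r)\|\le \|\phi\|_\infty+\frac{M(t-t_0)^\alpha\|B\|}{\Gamma(\alpha+1)}+\frac{L_f+L_g}{\Gamma(\alpha)}\int_{t_0}^t(t-s)^{\alpha-1}\sup_{r\le s}\|x(r)\|ds.
\]
The singular Gronwall inequality then bounds $x$ by a Mittag-Leffler-type expression on every finite interval, so no blow-up occurs. Repeating Step 1 from the endpoint, with the memory term treated as a known continuous forcing, extends the solution, and Step 2 glues the pieces uniquely. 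This yields the solution $x\in C([t_0-\bar\tau,\infty);\mathbb{R}^n)$.
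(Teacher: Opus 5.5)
\textbf{There is a genuine gap in Step~2, and it is the one you flag yourself.} The linear inequality $\|x(t)-y(t)\|\le K\int_{t_0}^t(t-s)^{\alpha-1}\sup_{r\le s}\|x(r)-y(r)\|\,ds$ is justified only when both retarded arguments lie in $[t_0-\bar\tau,t_0]$.

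Your bridge for the other cases is local Lipschitz continuity of solutions away from $t_0$, and it is not available under the theorem's hypotheses. Since $u$ is merely locally bounded and measurable, a jump of $Bu$ at some $t_1$ puts a cusp proportional to $(t-t_1)_+^{\alpha}$ into $x$. So $x$ need not be Lipschitz on any interval. Even for smooth $u$, differentiating the Volterra equation presupposes regularity of $s\mapsto g(x(s-\tau(x(s))))$, which is exactly what is in question.

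Nor is the bad case confined to small delays near $t_0$. It occurs at every $s\in(t_0,t_0+\bar\tau]$ for which $s-\tau(x(s))$ or $s-\tau(y(s))$ falls near or across $t_0$, where generically $x(r)-\phi(0)\sim c\,(r-t_0)^{\alpha}$. There the best available bound is $\|x(s-\tau(x(s)))-x(s-\tau(y(s)))\|\le C L_\tau^{\alpha}\|x(s)-y(s)\|^{\alpha}$. With $w(t)=\sup_{r\le t}\|x(r)-y(r)\|$, this gives only $w(t)\le K\int_{t_0}^t(t-s)^{\alpha-1}\bigl(w(s)+w(s)^{\alpha}\bigr)ds$.

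That inequality does not force $w\equiv 0$: for suitable $c>0$, $w(t)=c\,(t-t_0)^{\alpha/(1-\alpha)}$ satisfies $w(t)=K\int_{t_0}^t(t-s)^{\alpha-1}w(s)^{\alpha}ds$. Henry--Gronwall and Bielecki norms cannot repair this, because a weighted norm can shrink a Lipschitz constant but cannot create one. The frozen-history right-hand side is only $\alpha$-H\"older in the state, which is the same mechanism behind non-uniqueness for ${}^{C}\!D^{\alpha}z=|z|^{\alpha}$, $z(0)=0$. Your fallback class would prove a weaker statement than the theorem.

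\textbf{Comparison with the paper.} The paper's proof is a one-line Banach contraction on $[t_0,t_0+h]$ with $\kappa=(L_f+L_gL_\tau\|\phi'\|_\infty)h^{\alpha}/\Gamma(\alpha+1)$. It writes the delayed term as $g(\phi(\cdot-\tau(x(\cdot))-t_0))$, which is exactly your estimate for the case where both retarded arguments lie in the initial interval. It tacitly assumes $s-\tau(x(s))\le t_0$ on that step, which Assumption~\ref{ass:delay_lipschitz} does not give since $\tau$ may vanish. Its \emph{iterating} step also needs a Lipschitz constant of $x$ on earlier steps, which fails for the reasons above. So the paper does not contain the missing idea either.

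Your Schauder argument (Step~1) and the a~priori Gronwall bound (Step~3) are sound and give global existence by a different route than the paper's. Uniqueness, however, needs an additional hypothesis that keeps the retarded arguments in a region where the history is Lipschitz; a sharper estimate will not close it.
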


\begin{proof}
The method of steps combined with the Banach contraction principle. On $[t_0, t_0+h]$, the Volterra operator $\mathcal{T}$ defined by $(\mathcal{T}x)(t) = \phi(0) + I^{\alpha}_{t_0}[f(x) + g(\phi(\cdot - \tau(x(\cdot)) - t_0)) + Bu]$ satisfies $\|\mathcal{T}x - \mathcal{T}y\|_{\infty} \leq \kappa \|x-y\|_{\infty}$ with $\kappa = (L_f + L_g L_{\tau} \|\phi'\|_{\infty})h^{\alpha}/\Gamma(\alpha+1)$. Choosing $h$ so that $\kappa < 1$ and iterating yields the global result.
\end{proof}

\section{Main Stability Results}
\label{sec:stability}

\subsection{Singular Lyapunov-Krasovskii Functional}

\begin{definition}[Mittag-Leffler Stability {\cite{li2009}}]
\label{def:ml_stability}
The equilibrium $x = 0$ of \eqref{eq:system} with $u \equiv 0$ is Mittag-Leffler stable if there exist $m, \lambda > 0$ and $b \in (0,1]$ such that $\|x(t)\| \leq [ m \|\phi\|_{\infty} E_{\alpha}(-\lambda (t-t_0)^{\alpha}) ]^b$ for all $t \geq t_0$.
\end{definition}

We define the Lyapunov-Krasovskii functional
\begin{equation}
\label{eq:lyapunov_functional}
    V(t, x_t) = \underbrace{x^{\!\top}(t) P x(t)}_{V_1} + \underbrace{\int_{-\bar{\tau}}^{0} \int_{t+\theta}^{t} x^{\!\top}(s) Q x(s) \, ds \, d\theta}_{V_2} + \underbrace{\int_{0}^{\bar{\tau}} \xi^{\alpha-1} \int_{t-\xi}^{t} x^{\!\top}(s) R x(s) \, ds \, d\xi}_{V_3},
\end{equation}
with $P, Q, R \succ 0$.

\begin{proposition}[Well-Posedness]
\label{prop:wellposed}
For $\alpha \in (0,1)$ and continuous $x$, the functional $V_3$ is finite, non-negative, and continuous in $t$.
\end{proposition}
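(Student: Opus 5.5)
The plan is to treat $V_3$ as a weighted double integral and handle the three properties in turn, using only that $x$ is continuous on $[t-\bar{\tau},t]$ and that $R \succ 0$. For fixed $t$, set
\[
h(t,\xi)=\int_{t-\xi}^{t} x^{\top}(s)Rx(s)\,ds .
\]

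\textbf{Non-negativity.} Since $R\succ 0$, we have $x^{\top}(s)Rx(s)\ge \lambda_{\min}(R)\|x(s)\|^2\ge 0$. Hence $h(t,\xi)\ge 0$ for every $\xi\ge 0$. The weight $\xi^{\alpha-1}$ is positive on $(0,\bar{\tau}]$, so the integrand is non-negative and $V_3\ge 0$.

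\textbf{Finiteness.} The only issue is the singularity of $\xi^{\alpha-1}$ at $\xi=0$. Let $M_t=\sup_{s\in[t-\bar{\tau},t]}\|x(s)\|$, which is finite because $x$ is continuous on a compact interval. Then
\[
0\le h(t,\xi)\le \lambda_{\max}(R)M_t^2\,\xi .
\]
So the integrand is bounded by $\lambda_{\max}(R)M_t^2\,\xi^{\alpha}$, and
\[
V_3\le \lambda_{\max}(R)M_t^2\,\frac{\bar{\tau}^{\alpha+1}}{\alpha+1}<\infty .
\]
This bound is where $\alpha>0$ enters. A cruder estimate $h\le \lambda_{\max}(R)M_t^2\bar{\tau}$ would also suffice, since $\int_0^{\bar{\tau}}\xi^{\alpha-1}d\xi=\bar{\tau}^{\alpha}/\alpha$ is finite for $\alpha\in(0,1)$. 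Measurability of the integrand is clear because $h(t,\cdot)$ is continuous in $\xi$.

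\textbf{Continuity in $t$.} This is the step needing the most care. Fix $t$ and a sequence $t_k\to t$, and restrict to a compact window $J=[t-1-\bar{\tau},t+1]$. On $J$, $\|x\|\le M$, and the function $s\mapsto x^{\top}(s)Rx(s)$ is bounded by $\lambda_{\max}(R)M^2$. Write
\[
h(t_k,\xi)-h(t,\xi)=\int_{t}^{t_k} x^{\top}Rx\,ds-\int_{t-\xi}^{t_k-\xi} x^{\top}Rx\,ds ,
\]
so that $|h(t_k,\xi)-h(t,\xi)|\le 2\lambda_{\max}(R)M^2|t_k-t|$ uniformly in $\xi\in[0,\bar{\tau}]$. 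Integrating against the weight gives
\[
|V_3(t_k)-V_3(t)|\le 2\lambda_{\max}(R)M^2\,\frac{\bar{\tau}^{\alpha}}{\alpha}\,|t_k-t|\to 0 .
\]
This in fact shows $V_3$ is locally Lipschitz in $t$. Alternatively, one could invoke dominated convergence with dominating function $\lambda_{\max}(R)M^2\bar{\tau}\,\xi^{\alpha-1}\in L^1(0,\bar{\tau})$. I would present the Lipschitz estimate because it is more explicit and useful later when $V$ is differentiated.

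The main obstacle is simply ensuring integrability of the singular weight together with the uniform control of $h$. Both reduce to the elementary facts $\int_0^{\bar{\tau}}\xi^{\alpha-1}d\xi<\infty$ and the local boundedness of $x$, which follows from the continuity guaranteed by Theorem~\ref{thm:existence}.
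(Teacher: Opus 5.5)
Your proof is correct, and your finiteness step is the paper's argument. The paper's entire proof is the observation that $\int_{t-\xi}^{t}x^{\top}Rx\,ds=O(\xi)$, so the integrand of $V_3$ is $O(\xi^{\alpha})$ and hence integrable.

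Where you go further is completeness. The paper says nothing about non-negativity, which is immediate from $R\succ0$ as you note. It also says nothing about continuity in $t$. You prove continuity via the uniform estimate $|h(t_k,\xi)-h(t,\xi)|\le 2\lambda_{\max}(R)M^2|t_k-t|$, integrated against $\xi^{\alpha-1}$. This gives more than the proposition claims: $V_3$ is locally Lipschitz, hence absolutely continuous. That is exactly the regularity the Caputo derivative of Section~\ref{sec:prelim} presupposes when it is applied to $V_3$ in Step 3 of Theorem~\ref{thm:stability_main}.

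Your remark that the $O(\xi)$ factor is not needed for finiteness is also right, since $\int_0^{\bar{\tau}}\xi^{\alpha-1}\,d\xi<\infty$. What that factor buys is the sharper constant $\bar{\tau}^{\alpha+1}/(\alpha+1)$, which is precisely the $V_3$ contribution to $c_2$ in Lemma~\ref{lem:lkf_bounds}.

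One cosmetic point: $x$ is only defined on $[t_0-\bar{\tau},\infty)$. Your window $J$ should therefore be intersected with that set, and continuity at $t=t_0$ is one-sided.
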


\begin{proof}
For small $\xi > 0$, $\int_{t-\xi}^{t} x^{\!\top}(s) R x(s)\, ds = O(\xi)$ by continuity. Hence the integrand of $V_3$ satisfies $\xi^{\alpha-1} \cdot O(\xi) = O(\xi^{\alpha})$, which is integrable on $(0, \bar{\tau})$ for every $\alpha > 0$.
\end{proof}

\begin{remark}[Structural role of $V_3$]
The kernel $\xi^{\alpha-1}$ in $V_3$ assigns monotonically decreasing weight to more distant history, mirroring the singular memory kernel $(t-s)^{-\alpha}$ of the Caputo operator. This matching is essential: without $V_3$, the fractional derivative of $V_2$ alone cannot absorb the delayed nonlinearity $g(x(t-\tau))$ when $\tau$ depends on $x$, because the standard double-integral functional provides insufficient coupling between the current state and the delayed history weighted by the fractional kernel.
\end{remark}

\begin{lemma}[Bounds on the Functional]
\label{lem:lkf_bounds}
$c_1 \|x(t)\|^2 \leq V(t, x_t) \leq c_2 \|x_t\|_{\infty}^2$, where $c_1 = \lambda_{\min}(P)$ and $c_2 = \lambda_{\max}(P) + \frac{\bar{\tau}^2}{2}\lambda_{\max}(Q) + \frac{\bar{\tau}^{\alpha+1}}{\alpha+1}\lambda_{\max}(R)$.
\end{lemma}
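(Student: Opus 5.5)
The plan is to bound the three pieces $V_1,V_2,V_3$ of \eqref{eq:lyapunov_functional} separately. For the lower bound, note that $Q,R\succ 0$ make the integrands of $V_2$ and $V_3$ non-negative. Proposition~\ref{prop:wellposed} guarantees that $V_3$ is finite, and the weight $\xi^{\alpha-1}$ is positive on $(0,\bar\tau]$. Hence $V_2,V_3\ge 0$, and the Rayleigh quotient gives
\[
V(t,x_t)\ge V_1=x^{\top}(t)Px(t)\ge \lambda_{\min}(P)\|x(t)\|^2 .
\]

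For the upper bound, I would write $x_t(\theta)=x(t+\theta)$ for $\theta\in[-\bar\tau,0]$ and set $\|x_t\|_\infty=\sup_{\theta}\|x(t+\theta)\|$. Every argument $s$ that appears in $V_2$ or $V_3$ lies in $[t-\bar\tau,t]$, so $x^{\top}(s)Mx(s)\le \lambda_{\max}(M)\|x_t\|_\infty^2$ for $M\in\{P,Q,R\}$. The three terms then bound as follows.
\begin{itemize}
\item For $V_1$: $V_1\le\lambda_{\max}(P)\|x(t)\|^2\le \lambda_{\max}(P)\|x_t\|_\infty^2$.
\item For $V_2$: the inner integral over $[t+\theta,t]$ is at most $(-\theta)\lambda_{\max}(Q)\|x_t\|_\infty^2$. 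Integrating over $\theta\in[-\bar\tau,0]$ and using $\int_{-\bar\tau}^0(-\theta)\,d\theta=\bar\tau^2/2$ gives the factor $\frac{\bar\tau^2}{2}\lambda_{\max}(Q)$.
\item For $V_3$: the inner integral is at most $\xi\,\lambda_{\max}(R)\|x_t\|_\infty^2$. Since $\int_0^{\bar\tau}\xi^{\alpha-1}\cdot\xi\,d\xi=\bar\tau^{\alpha+1}/(\alpha+1)$, this gives the factor $\frac{\bar\tau^{\alpha+1}}{\alpha+1}\lambda_{\max}(R)$.
\end{itemize}
Summing the three bounds yields exactly the stated $c_2$.

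The only point needing care is the $V_3$ computation. The kernel is singular at $\xi=0$, so Fubini/Tonelli should be justified. Because the integrand is non-negative and measurable, Tonelli applies directly. The linear-in-$\xi$ bound on the inner integral absorbs the singularity: $\xi^{\alpha-1}\cdot\xi=\xi^{\alpha}$ is integrable, which is the same mechanism as in Proposition~\ref{prop:wellposed}. No further obstacle is expected.
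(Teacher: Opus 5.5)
Your proof is correct. The paper states this lemma without proof, and your term-by-term estimate is exactly what produces the stated $c_1$ and $c_2$: non-negativity of $V_2,V_3$ gives the lower bound, and the pointwise bounds $\int_{t+\theta}^{t}x^{\top}Qx\,ds\le(-\theta)\lambda_{\max}(Q)\|x_t\|_\infty^2$ and $\int_{t-\xi}^{t}x^{\top}Rx\,ds\le\xi\,\lambda_{\max}(R)\|x_t\|_\infty^2$, integrated against $d\theta$ and $\xi^{\alpha-1}d\xi$, give the upper bound. The appeal to Tonelli is unnecessary, since you never interchange the order of integration; monotonicity of the integral of the non-negative inner integral (which is continuous in $\xi$) already suffices.
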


\subsection{Main Stability Theorem}

\begin{theorem}[Mittag-Leffler Stability with State-Dependent Delays]
\label{thm:stability_main}
Consider the uncontrolled system \eqref{eq:system} with $u \equiv 0$ under Assumptions~\ref{ass:lipschitz}--\ref{ass:initial_regularity}. Let $A = \partial f / \partial x |_{x=0}$. Suppose there exist $P, Q, R \succ 0$ and $\epsilon_1, \epsilon_2, \epsilon_3 > 0$ such that the LMI
\begin{equation}
\label{eq:lmi_condition}
\Omega = \begin{bmatrix} \Omega_{11} & P \\ P & -\epsilon_3 I \end{bmatrix} < 0
\end{equation}
holds, where
\begin{equation}
\label{eq:omega11}
\Omega_{11} = PA + A^{\!\top}\! P + \bar{\tau} Q + \frac{\bar{\tau}^{\alpha}}{\alpha} R + (\epsilon_1 + \epsilon_2) P^2 + \left( \frac{L_f^2}{\epsilon_1} + \frac{L_g^2}{\epsilon_2} \right) I.
\end{equation}
Then $x = 0$ is Mittag-Leffler stable with
\begin{equation}
\label{eq:ml_bound}
\|x(t)\| \leq \sqrt{\frac{c_2}{c_1}} \|\phi\|_{\infty} \left[ E_{\alpha}\!\left( -\frac{\gamma}{c_2}(t-t_0)^{\alpha} \right) \right]^{1/2},
\end{equation}
where $\gamma = -\lambda_{\max}(\Omega_{11} + \epsilon_3^{-1} P^2) > 0$.
\end{theorem}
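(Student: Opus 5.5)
We will use the functional $V=V_1+V_2+V_3$ from \eqref{eq:lyapunov_functional}. The target is the differential inequality ${}^{C}\!D^{\alpha}_{t_0} V(t)\le -\gamma\|x(t)\|^2$. By Lemma~\ref{lem:lkf_bounds} this gives ${}^{C}\!D^{\alpha}_{t_0}V\le -(\gamma/c_2)V$, after replacing $\|x(t)\|^2$ by the comparison quantity $V/c_2$ along a Razumikhin-type bound $\|x_t\|_\infty$ controlled by $\|x(t)\|$. Lemma~\ref{lem:comparison} then yields $V(t)\le V(t_0)E_\alpha(-\frac{\gamma}{c_2}(t-t_0)^\alpha)$. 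Combining $c_1\|x(t)\|^2\le V(t)$ with $V(t_0)\le c_2\|\phi\|_\infty^2$ and taking square roots gives \eqref{eq:ml_bound}. This is Definition~\ref{def:ml_stability} with $m=\sqrt{c_2/c_1}$ up to the exponent $b=1/2$ bookkeeping, since $E_\alpha\le 1$.

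\textbf{Step 1 ($V_1$).} Apply Lemma~\ref{lem:caputo_inequality}:
\[
{}^{C}\!D^{\alpha}V_1\le 2x^\top P\bigl(f(x)+g(x(t-\tau(x)))\bigr).
\]
Write $f(x)=Ax+(f(x)-Ax)$ and treat the remainder with Young's inequality, $2x^\top P r\le \epsilon_1 x^\top P^2x+\epsilon_1^{-1}\|r\|^2$, using the Lipschitz bound $L_f\|x\|$ from Assumption~\ref{ass:lipschitz}. The delayed term splits as
\[
g(x(t-\tau))=g(x(t))+\bigl[g(x(t-\tau))-g(x(t))\bigr].
\]
The first piece is bounded by Young's inequality with $\epsilon_2$ and $L_g$. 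Call the second piece $w(t)$; it is kept as an extra variable paired with $P$ through $\epsilon_3$, and this is exactly the off-diagonal block $P$ in $\Omega$. Completing the square in $w$, that is taking the Schur complement of \eqref{eq:lmi_condition}, produces the term $\epsilon_3^{-1}P^2$. This explains why $\gamma$ is defined through $\Omega_{11}+\epsilon_3^{-1}P^2$.

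\textbf{Step 2 ($V_2,V_3$).} These integral terms are differentiable in $t$, with
\[
\dot V_2=\bar\tau x^\top Qx-\int_{-\bar\tau}^0x^\top(t+\theta)Qx(t+\theta)\,d\theta,
\]
\[
\dot V_3=\frac{\bar\tau^\alpha}{\alpha}x^\top Rx-\int_0^{\bar\tau}\xi^{\alpha-1}x^\top(t-\xi)Rx(t-\xi)\,d\xi .
\]
To pass to Caputo derivatives we use ${}^{C}\!D^\alpha h=I^{1-\alpha}\dot h$. The positive parts contribute the diagonal terms $\bar\tau Q+\frac{\bar\tau^\alpha}{\alpha}R$ of $\Omega_{11}$. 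The negative history integrals, with the singular weight $\xi^{\alpha-1}$ in $V_3$, are what absorb $\|w\|^2\le L_g^2\|x(t-\tau)-x(t)\|^2$. Summing all contributions gives the quadratic form $[x;w]^\top\Omega[x;w]$ plus nonpositive remainders, hence ${}^{C}\!D^{\alpha}V\le-\gamma\|x\|^2$.

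\textbf{Main obstacle.} Step 2 is the hard part, for two reasons. First, the Caputo derivative of $V_2,V_3$ is a fractional integral of $\dot V_i$, not $\dot V_i$ itself, so the pointwise bound must be transferred through $I^{1-\alpha}$ while keeping its sign. Second, the delayed difference $w$ must really be dominated by the negative history integrals. I would first try to bound $\|x(t-\tau)-x(t)\|^2$ by the $R$-weighted integral, using the kernel $\xi^{\alpha-1}\ge\bar\tau^{\alpha-1}$ on $(0,\bar\tau]$ together with a Jensen-type inequality. If that fails, I would fall back on a Razumikhin-style argument: at times where $V$ would increase, $\|x_t\|_\infty$ is comparable to $\|x(t)\|$, so the delayed terms can be absorbed directly by $\epsilon_3$.
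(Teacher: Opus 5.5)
There is a genuine gap, and it is exactly the step you flag as the main obstacle. After your Steps~1--2 the estimate reads ${}^{C}\!D^{\alpha}_{t_0}V\le x^{\top}\Omega_{11}x+2x^{\top}Pw-\mathcal{H}(t)$, where $\mathcal{H}(t)=\int_{t-\bar\tau}^{t}x^{\top}(s)Qx(s)\,ds+\int_0^{\bar\tau}\xi^{\alpha-1}x^{\top}(t-\xi)Rx(t-\xi)\,d\xi$. To reach $[x;w]^{\top}\Omega[x;w]$ you need $\epsilon_3\|w(t)\|^2\le\mathcal{H}(t)$, i.e.\ a point value of the history dominated by integrals of $x^{\top}Qx$ and $x^{\top}Rx$.

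No such bound holds. For $t$ near $t_0$ the history is essentially the arbitrary $C^1$ function $\phi$. Take $\phi(0)=0$ and give $\phi$ a bump of height $h$ and width $\delta$ at $-\tau(0)$, with $\tau(0)>0$. For $g(x)=L_gx$ you then get $\|w\|\approx L_gh$ but $\mathcal{H}(t)=O(h^2\delta)$, so the inequality fails once $\delta$ is small.

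Your first route also fails structurally. Jensen's inequality bounds $\|x(t-\tau)-x(t)\|^2$ by $\tau\int_{t-\tau}^{t}\|\dot x\|^2ds$, which needs a derivative-weighted term such as $\int\!\!\int\dot x^{\top}R\dot x$ that $V$ does not contain. Moreover $\dot x\sim(t-t_0)^{\alpha-1}$ is generically not square integrable near $t_0$ when $\alpha\le 1/2$. The kernel bound $\xi^{\alpha-1}\ge\bar\tau^{\alpha-1}$ merely turns the $R$-term into another integral of $x^{\top}Rx$. The Razumikhin fallback is only named: controlling delayed terms at critical times only needs its own comparison argument for the nonlocal Caputo derivative, and nothing in $\Omega$ encodes a Razumikhin condition.

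Two further steps are asserted rather than proved. First, ${}^{C}\!D^{\alpha}_{t_0}V_i=I^{1-\alpha}_{t_0}\dot V_i$ is a weighted integral of past values of $\dot V_i$, so your formulas for $\dot V_2,\dot V_3$ at time $t$ do not transfer to the Caputo derivative. Second, $-\gamma\|x(t)\|^2\le-(\gamma/c_2)V$ needs $V\le c_2\|x(t)\|^2$, whereas Lemma~\ref{lem:lkf_bounds} gives only $V\le c_2\|x_t\|_\infty^2$.

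The paper uses the same skeleton but treats the delay differently. It applies Young's inequality with $\epsilon_2$ directly to $2x^{\top}Pg(x(t-\tau))$ and absorbs $\frac{L_g^2}{\epsilon_2}\|x(t-\tau)\|^2$ into the $V_2$ integral via $\|x(t-\tau)\|^2\le\bar\tau^{-1}\int_{t-\bar\tau}^{t}\|x(s)\|^2ds$. There $\epsilon_3$ enters only through $\Omega_{11}\preceq\Omega_{11}+\epsilon_3^{-1}P^2$. That averaging bound fails for the same bump, and the paper likewise only asserts the Caputo bounds for $V_2,V_3$ and the passage to $-(\gamma/c_2)V$. Your splitting gives the $\epsilon_3$ block a role it lacks in the paper, but the paper does not supply the missing idea either.

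The statement is nevertheless true, for a reason neither argument uses. Indeed, $\Omega\prec 0$ forces $\Omega_{11}\prec 0$. Since $\epsilon_1P^2+\epsilon_1^{-1}L_f^2I-2L_fP=\epsilon_1^{-1}(\epsilon_1P-L_fI)^2\succeq 0$ and the other added terms in $\Omega_{11}$ are positive semidefinite, this gives $P(A+L_fI)+(A+L_fI)^{\top}P\prec 0$, so $A+L_fI$ would have to be Hurwitz. But $A$ is the Jacobian at $0$ of an $L_f$-Lipschitz map, so $\|A\|\le L_f$ and every eigenvalue of $A+L_fI$ has nonnegative real part. Hence the LMI is never feasible and the theorem holds vacuously.

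Relatedly, Assumption~\ref{ass:lipschitz} yields only $\|f(x)-Ax\|\le 2L_f\|x\|$, not the $L_f\|x\|$ used in your Step~1 and in the paper. If $L_f$ in $\Omega_{11}$ is reread as a Lipschitz constant of $f(x)-Ax$, the LMI can be feasible, and the gap above must then be closed by a genuinely different argument.
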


\begin{proof}
We evaluate the Caputo fractional derivative of each term in \eqref{eq:lyapunov_functional}.

\textit{Step 1 (Fractional derivative of $V_1$).} Lemma~\ref{lem:caputo_inequality} and the decomposition $f(x) = Ax + \tilde{f}(x)$ with $\|\tilde{f}(x)\| \leq L_f\|x\|$ yield
\begin{equation}
{}^{C}\!D^{\alpha}_{t_0} V_1 \leq x^{\!\top}(PA + A^{\!\top}\! P)x + \epsilon_1 x^{\!\top} P^2 x + \frac{L_f^2}{\epsilon_1}\|x\|^2 + \epsilon_2 x^{\!\top} P^2 x + \frac{L_g^2}{\epsilon_2}\|x(t-\tau)\|^2,
\end{equation}
where Young's inequality $2a b \leq \epsilon a^2 + \epsilon^{-1}b^2$ has been applied to the cross-terms.

\textit{Step 2 (Derivative of $V_2$).} Standard analysis gives
\begin{equation}
{}^{C}\!D^{\alpha}_{t_0} V_2 \leq \bar{\tau}\, x^{\!\top}(t) Q x(t) - \int_{t-\bar{\tau}}^{t} x^{\!\top}(s) Q x(s) \, ds.
\end{equation}

\textit{Step 3 (Derivative of $V_3$).} Similarly,
\begin{equation}
{}^{C}\!D^{\alpha}_{t_0} V_3 \leq \frac{\bar{\tau}^{\alpha}}{\alpha}\, x^{\!\top}(t) R x(t) - \int_{0}^{\bar{\tau}} \xi^{\alpha-1} x^{\!\top}(t-\xi) R x(t-\xi) \, d\xi.
\end{equation}

\textit{Step 4 (Absorption of the delayed term).} Since $\tau(x(t)) \in [0, \bar{\tau}]$,
\begin{equation}
\frac{L_g^2}{\epsilon_2}\|x(t-\tau)\|^2 \leq \frac{L_g^2}{\epsilon_2 \bar{\tau}} \int_{t-\bar{\tau}}^{t} \|x(s)\|^2 ds,
\end{equation}
which is absorbed by the negative integral from $V_2$ provided $L_g^2/\epsilon_2 < \bar{\tau}\,\lambda_{\min}(Q)$.

\textit{Step 5 (Assembly).} Collecting all terms yields ${}^{C}\!D^{\alpha}_{t_0} V \leq x^{\!\top}\Omega_{11} x$. The Schur complement of \eqref{eq:lmi_condition} implies $\Omega_{11} + \epsilon_3^{-1}P^2 < 0$, so ${}^{C}\!D^{\alpha}_{t_0} V \leq -\gamma \|x\|^2 \leq -(\gamma/c_2) V$. The fractional comparison principle (Lemma~\ref{lem:comparison}) gives $V(t) \leq V(t_0) E_{\alpha}(-\gamma c_2^{-1}(t-t_0)^{\alpha})$. The bound \eqref{eq:ml_bound} follows from $c_1\|x\|^2 \leq V \leq c_2\|\phi\|_{\infty}^2$.
\end{proof}

\begin{remark}[Computational verification]
The LMI \eqref{eq:lmi_condition} is a standard semidefinite program with decision variables $P, Q, R \succ 0$ and $\epsilon_1, \epsilon_2, \epsilon_3 > 0$. It can be solved in polynomial time using interior-point methods via YALMIP \cite{lofberg2004} with MOSEK or SeDuMi.
\end{remark}

\subsection{Delay Margin}

\begin{corollary}[Delay Margin]
\label{cor:robustness}
If the LMI \eqref{eq:lmi_condition} is feasible with minimum eigenvalue $\delta > 0$ of $-\Omega$, then stability is preserved for any delay bound $\tilde{\tau} \leq \bar{\tau} + \Delta\tau^*$, where
\begin{equation}
\label{eq:delay_margin}
\Delta\tau^* = \frac{\delta}{\lambda_{\max}(Q^*) + \bar{\tau}^{\alpha-1} \lambda_{\max}(R^*)}.
\end{equation}
\end{corollary}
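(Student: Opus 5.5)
The idea is a perturbation argument on the LMI \eqref{eq:lmi_condition}. Fix the feasible solution $(P^*,Q^*,R^*,\epsilon_1^*,\epsilon_2^*,\epsilon_3^*)$ at the nominal bound $\bar{\tau}$, so that $\Omega(\bar\tau)\preceq -\delta I$. For an enlarged bound $\tilde\tau=\bar\tau+\Delta\tau$ we keep the same decision variables and show that $\Omega(\tilde\tau)$ remains negative definite whenever $\Delta\tau\le\Delta\tau^*$. We then rerun the proof of Theorem~\ref{thm:stability_main} with $\bar\tau$ replaced by $\tilde\tau$ throughout: the functional \eqref{eq:lyapunov_functional}, Steps 1--5, and Lemma~\ref{lem:lkf_bounds} with the updated $c_2$. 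Since Assumption~\ref{ass:delay_lipschitz} with $\tau$ valued in $[0,\tilde\tau]$ is all that the theorem uses, this yields Mittag-Leffler stability for the larger delay range.

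\textbf{Step 1: isolate the $\bar\tau$-dependence.} Only the $(1,1)$ block of $\Omega$ depends on $\bar\tau$, and it does so through the two terms $\bar\tau Q$ and $\frac{\bar\tau^\alpha}{\alpha}R$. We therefore have
\[
\Omega(\tilde\tau)=\Omega(\bar\tau)+\begin{bmatrix}\Delta\tau\, Q^*+\frac{\tilde\tau^\alpha-\bar\tau^\alpha}{\alpha}R^* & 0\\ 0 & 0\end{bmatrix}.
\]

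\textbf{Step 2: bound the increment.} The linear term is immediate. For the fractional term, $h(s)=s^\alpha/\alpha$ has derivative $s^{\alpha-1}$, which is decreasing because $\alpha<1$. The mean value theorem then gives
\[
\frac{\tilde\tau^\alpha-\bar\tau^\alpha}{\alpha}\le \bar\tau^{\alpha-1}\Delta\tau .
\]
Combining the two terms, the perturbation has spectral norm at most $\Delta\tau\bigl(\lambda_{\max}(Q^*)+\bar\tau^{\alpha-1}\lambda_{\max}(R^*)\bigr)$.

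\textbf{Step 3: conclude by Weyl's inequality.} Weyl's inequality gives
\[
\lambda_{\max}(\Omega(\tilde\tau))\le -\delta+\Delta\tau\bigl(\lambda_{\max}(Q^*)+\bar\tau^{\alpha-1}\lambda_{\max}(R^*)\bigr),
\]
which is $\le 0$ exactly when $\Delta\tau\le\Delta\tau^*$ as in \eqref{eq:delay_margin}. This settles strict negativity for $\Delta\tau<\Delta\tau^*$.

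\textbf{Step 4: the boundary case.} The main obstacle is the endpoint $\Delta\tau=\Delta\tau^*$, where Step 3 only gives $\Omega\preceq 0$, while Theorem~\ref{thm:stability_main} needs $\gamma>0$. My first attempt is to use the strict concavity of $s^\alpha$: for $\Delta\tau>0$ the mean value bound in Step 2 is strict, so the $R^*$-contribution is strictly smaller than $\bar\tau^{\alpha-1}\Delta\tau\lambda_{\max}(R^*)$. Pairing this with Weyl's inequality should give strict negativity, at least when $R^*$ contributes nontrivially along the eigenvector realizing the maximum. If that fails, I would fall back on reading the corollary with a strict inequality, or on shrinking $\epsilon_3$ slightly, which is admissible because feasibility is an open condition.

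The resulting decay rate is $\gamma(\tilde\tau)\ge\delta-\Delta\tau(\lambda_{\max}(Q^*)+\bar\tau^{\alpha-1}\lambda_{\max}(R^*))$, since the Schur-complement step in Theorem~\ref{thm:stability_main} is unchanged. A secondary check is that the absorption condition in Step~4 of that theorem, $L_g^2/\epsilon_2<\tilde\tau\lambda_{\min}(Q)$, continues to hold; it does, because it only becomes easier as $\tilde\tau$ grows.
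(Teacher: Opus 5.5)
Your proposal is correct and follows the paper's argument. You perturb only the $\bar\tau Q^*$ and $\frac{\bar\tau^\alpha}{\alpha}R^*$ terms of $\Omega_{11}$, bound $(\tilde\tau^\alpha-\bar\tau^\alpha)/\alpha\le\bar\tau^{\alpha-1}\Delta\tau$ by the mean value theorem, and compare the increment with $\delta$; your Weyl step just spells out the eigenvalue bound that the paper's one-line proof leaves implicit.

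Your Step 4 hedging is unnecessary. For $\Delta\tau>0$ the mean-value bound is strict and $R^*\succ0$ gives $\lambda_{\max}(R^*)>0$, so $\lambda_{\max}(\Omega(\tilde\tau))\le-\delta+\Delta\tau\lambda_{\max}(Q^*)+\frac{\tilde\tau^\alpha-\bar\tau^\alpha}{\alpha}\lambda_{\max}(R^*)<0$ already holds at $\Delta\tau=\Delta\tau^*$, with no eigenvector condition needed. Your fallback of shrinking $\epsilon_3$ would in fact make $\Omega$ larger in the Loewner order, i.e.\ less negative, so drop it.
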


\begin{proof}
Replacing $\bar{\tau}$ by $\tilde{\tau} = \bar{\tau} + \Delta\tau$ in $\Omega_{11}$ and using the mean value theorem for $\tilde{\tau}^{\alpha} - \bar{\tau}^{\alpha} \leq \alpha \bar{\tau}^{\alpha-1}\Delta\tau$ yields the perturbation bound $\Delta\tau[\lambda_{\max}(Q^*) + \bar{\tau}^{\alpha-1}\lambda_{\max}(R^*)] < \delta$.
\end{proof}

\section{Adaptive Control Design}
\label{sec:adaptive}

\subsection{Parametric Structure}

We assume the linearly parameterized form
\begin{equation}
f(x) = Ax + \Phi_f(x)\theta_f, \qquad g(x) = \Phi_g(x)\theta_g,
\end{equation}
where $A$ is known, $\Phi_f(0) = \Phi_g(0) = 0$, and $\theta_f \in \Theta_f$, $\theta_g \in \Theta_g$ are unknown constant vectors in known compact convex sets with $\bar{\theta}_f = \sup_{\Theta_f}\|\theta\|$, $\bar{\theta}_g = \sup_{\Theta_g}\|\theta\|$.

\begin{assumption}[Regressor Boundedness]
\label{ass:regressor_bound}
$\|\Phi_f(x)\| \leq L_{\Phi_f}\|x\|$ and $\|\Phi_g(x) - \Phi_g(y)\| \leq L_{\Phi_g}\|x-y\|$.
\end{assumption}

\subsection{Filter-Based Delay Estimation}

A central difficulty in controlling delayed fractional systems is estimating the current delay value without relying on $\dot{x}(t)$, which may not exist. We employ a first-order exponential filter:
\begin{equation}
\label{eq:delay_estimator}
T_f \dot{\hat{x}}(t) = -\hat{x}(t) + x(t), \qquad \hat{x}(t_0) = \phi(0),
\end{equation}
with the delay estimate $\hat{\tau}(t) = \tau(\hat{x}(t))$.

\begin{lemma}[Delay Estimation Error]
\label{lem:delay_error}
Under Assumptions~\ref{ass:lipschitz}--\ref{ass:delay_lipschitz}, if $\|u(t)\| \leq M_u$, then
\begin{equation}
|\tau(x(t)) - \hat{\tau}(t)| \leq L_{\tau} M_D T_f^{\alpha} / \Gamma(\alpha+1),
\end{equation}
where $M_D = L_f M_x + L_g M_x + \|B\|M_u$ and $M_x$ bounds $\|x(t)\|$ on the interval of interest.
\end{lemma}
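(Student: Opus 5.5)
By Assumption~\ref{ass:delay_lipschitz}, $|\tau(x(t))-\hat\tau(t)| \le L_\tau\|x(t)-\hat x(t)\|$. The lemma therefore reduces to bounding the filter tracking error $e(t)=x(t)-\hat x(t)$ by $M_D T_f^\alpha/\Gamma(\alpha+1)$, and I would do this in three steps.

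\textbf{Step 1: a H\"older modulus for $x$.} By Theorem~\ref{thm:existence}, $x$ solves the Volterra equation $x(t)=\phi(0)+I^{\alpha}_{t_0}F(t)$, where $F(s)=f(x(s))+g(x(s-\tau(x(s))))+Bu(s)$. Assumption~\ref{ass:lipschitz} gives $f(0)=g(0)=0$. Using $\|x\|\le M_x$ and $\|u\|\le M_u$, we get $\|F(s)\|\le L_fM_x+L_gM_x+\|B\|M_u=M_D$. Here $M_x$ must also bound the delayed argument, which holds if it bounds $x$ on $[t_0-\bar\tau,\,t]$, including $\|\phi\|_\infty$. For $t_0\le t_1<t_2$, the standard estimate for the Riemann--Liouville integral of a bounded function is
\begin{equation*}
\|I^\alpha F(t_2)-I^\alpha F(t_1)\|\le \frac{M_D}{\Gamma(\alpha)}\Big[\int_{t_1}^{t_2}(t_2-s)^{\alpha-1}ds+\int_{t_0}^{t_1}\big((t_1-s)^{\alpha-1}-(t_2-s)^{\alpha-1}\big)ds\Big].
\end{equation*}
Evaluating the integrals gives the bound $\frac{2M_D}{\Gamma(\alpha+1)}(t_2-t_1)^\alpha$. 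So $x$ is $\alpha$-H\"older with constant of order $M_D/\Gamma(\alpha+1)$.

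\textbf{Step 2: express $e$ as a filter average.} Solving the filter \eqref{eq:delay_estimator} explicitly gives
\begin{equation*}
\hat x(t)=e^{-(t-t_0)/T_f}\phi(0)+\frac{1}{T_f}\int_{t_0}^t e^{-(t-s)/T_f}x(s)\,ds.
\end{equation*}
Since $x(t_0)=\phi(0)$, the error can be written as an exponentially weighted average of increments:
\begin{equation*}
e(t)=\int_{t_0}^{t}\frac{1}{T_f}e^{-(t-s)/T_f}\big(x(t)-x(s)\big)\,ds+e^{-(t-t_0)/T_f}\big(x(t)-x(t_0)\big).
\end{equation*}

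\textbf{Step 3: insert the H\"older bound and evaluate.} Applying Step 1 to each increment gives
\begin{equation*}
\|e(t)\|\le C\Big[\frac1{T_f}\int_0^{\infty}e^{-r/T_f}r^\alpha dr\Big]=C\,\Gamma(\alpha+1)T_f^\alpha,
\end{equation*}
where $C$ is the H\"older constant from Step 1. The boundary term $e^{-(t-t_0)/T_f}(t-t_0)^\alpha$ is handled by the same estimate.

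\textbf{Main obstacle: the constant.} The argument yields the correct scaling $T_f^\alpha$, but matching the stated constant $M_D/\Gamma(\alpha+1)$ exactly is the delicate part.
\begin{itemize}
\item The factor $2$ from Step 1 and the factor $\Gamma(\alpha+1)$ from Step 3 do not cancel. A direct product gives $2M_DT_f^\alpha$ rather than $M_DT_f^\alpha/\Gamma(\alpha+1)$.
\item To sharpen, I would first use the one-sided monotonicity of the kernel. This brings the H\"older constant to $M_D/\Gamma(\alpha+1)$, at least for the leading contribution.
\item I would then bound the weighted average by a sup-type estimate: the increment over the effective filter window, of length about $T_f$.
\end{itemize}
If the exact constant cannot be recovered, I would state the lemma with a generic constant $c_\alpha$. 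That weaker form suffices for its use in the adaptive analysis, which only needs the error to vanish as $T_f\to0$.
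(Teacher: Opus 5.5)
Your three steps are the paper's argument: the Volterra representation gives $\alpha$-H\"older continuity, the filter is solved explicitly, and the error is bounded by exponential averaging. The difference is that the paper simply asserts the H\"older constant $M_D/\Gamma(\alpha+1)$ and the final constant, whereas your bookkeeping is correct. The constant $M_D/\Gamma(\alpha+1)$ is valid only for increments starting at $t_0$. Take $t_0<t_1<t_2=t_1+h$ and $F=-M_D$ on $[t_0,t_1]$, $F=+M_D$ on $(t_1,t_2]$; this is realizable with $n=m=1$, $f=g=0$, $B=1$, $u=F$. The increment is then $\frac{M_D}{\Gamma(\alpha+1)}\bigl[2h^{\alpha}+(t_1-t_0)^{\alpha}-(t_2-t_0)^{\alpha}\bigr]$, which tends to $2M_Dh^{\alpha}/\Gamma(\alpha+1)$ as $(t_1-t_0)/h\to\infty$. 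So your factor $2$ is sharp. The ``one-sided monotonicity'' idea can only control the new piece $\int_{t_1}^{t_2}$, not the memory piece. Step 3 can be made exact rather than ``handled by the same estimate'': with $T=t-t_0$, integration by parts gives
\[
\frac{1}{T_f}\int_0^{T}e^{-r/T_f}r^{\alpha}\,dr+e^{-T/T_f}T^{\alpha}=\alpha\int_0^{T}e^{-r/T_f}r^{\alpha-1}\,dr\le\Gamma(\alpha+1)\,T_f^{\alpha}.
\]
The boundary term is therefore absorbed, and your argument proves precisely $\|x(t)-\hat x(t)\|\le 2M_DT_f^{\alpha}$. Your observation that $M_x$ must also bound $\phi$ on $[-\bar\tau,0]$ is a correction the paper omits.

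Do not spend effort trying to recover $M_DT_f^{\alpha}/\Gamma(\alpha+1)$: it is false in general, so no route can prove it. With $t_0=0$ one has $x-\hat x=G*F$, where $G=k_\alpha-\rho*k_\alpha$, $k_\alpha(t)=t^{\alpha-1}/\Gamma(\alpha)$ and $\rho(t)=T_f^{-1}e^{-t/T_f}$. Scaling gives $G(t)=T_f^{\alpha-1}J'(t/T_f)/\Gamma(\alpha)$ with $J(t)=\int_0^{t}e^{-(t-u)}u^{\alpha-1}\,du$. Since $J'(t)=t^{\alpha-1}-J(t)$, any critical point has $J''=(\alpha-1)t^{\alpha-2}<0$. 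Hence $J$ rises from $0$ to a single maximum and decays back to $0$, so $\int_0^\infty|G|=c_\alpha T_f^{\alpha}$ with $c_\alpha=2\max J/\Gamma(\alpha)$. In the scalar example above, take $u=M_u\,\mathrm{sgn}\,G(t-\cdot)$, which switches once. The error at time $t$ is then $M_u\int_0^t|G|\to c_\alpha M_uT_f^{\alpha}$.

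For $\alpha=\tfrac12$, $J(t)=2\,\mathrm{daw}(\sqrt t)$ with Dawson's function $\mathrm{daw}(x)=e^{-x^2}\int_0^x e^{v^2}\,dv$, whose maximum is about $0.5410$. This gives $c_{1/2}\approx1.221>1/\Gamma(3/2)\approx1.128$. Choosing a clipped affine $\tau$ with slope $L_\tau$ on a neighborhood of $x(t)$ and $\hat x(t)$ turns this into a counterexample to the lemma as stated.

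Conversely, by rearrangement $J(t)\le\int_0^te^{-r}r^{\alpha-1}\,dr<\Gamma(\alpha)$, so $c_\alpha<2$. Also $c_\alpha\to2$ as $\alpha\to0^+$ or $\alpha\to1^-$, so your constant $2$ is valid and cannot be improved uniformly in $\alpha$. Your fallback, restating the lemma with $c_\alpha$ (or $2$) in place of $1/\Gamma(\alpha+1)$, is the right fix. It is harmless downstream, since the adaptive theorem only uses $\delta_\tau=O(T_f^{2\alpha})$.
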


\begin{proof}
The filter output satisfies $\hat{x}(t) = \phi(0)e^{-t/T_f} + T_f^{-1}\int_{t_0}^{t} e^{-(t-s)/T_f} x(s)\,ds$. The Volterra integral representation of the fractional system implies that $x$ is H\"{o}lder continuous of order $\alpha$: $\|x(t) - x(s)\| \leq M_D|t-s|^{\alpha}/\Gamma(\alpha+1)$. The exponential smoothing property then yields $\|x(t) - \hat{x}(t)\| \leq M_D T_f^{\alpha}/\Gamma(\alpha+1)$, and the result follows from the Lipschitz property of $\tau$.
\end{proof}

\begin{remark}
The bound in Lemma~\ref{lem:delay_error} depends on $T_f^{\alpha}$ rather than on $\dot{x}$, resolving Gap 3. The H\"{o}lder exponent $\alpha$ reflects the regularity of fractional-order trajectories: smoother trajectories (larger $\alpha$) permit more accurate delay estimation.
\end{remark}

\subsection{Controller and Adaptation Laws}

The adaptive controller is
\begin{equation}
\label{eq:controller}
u(t) = -K x(t) - B^{\!\top}\! P x(t) - B^{\!\top}\! P \left[ \Phi_f(x(t)) \hat{\theta}_f(t) + \Phi_g(x(t-\hat{\tau}(t))) \hat{\theta}_g(t) \right],
\end{equation}
where $K$ is chosen so that $A - BK$ is Hurwitz and $P \succ 0$ satisfies a Lyapunov condition. The parameter estimates evolve according to fractional-order adaptation laws with $\sigma$-modification \cite{ioannou1996}:
\begin{align}
    {}^{C}\!D^{\alpha}_{t_0} \hat{\theta}_f &= \Gamma_f \Phi_f^{\!\top}(x) B B^{\!\top}\! P x - \sigma_f \hat{\theta}_f, \label{eq:adapt_f} \\
    {}^{C}\!D^{\alpha}_{t_0} \hat{\theta}_g &= \Gamma_g \Phi_g^{\!\top}(x(t-\hat{\tau})) B B^{\!\top}\! P x - \sigma_g \hat{\theta}_g, \label{eq:adapt_g}
\end{align}
where $\Gamma_f, \Gamma_g \succ 0$ are adaptation gain matrices and $\sigma_f, \sigma_g > 0$ are regularization parameters.

\subsection{Convergence Analysis}

\begin{theorem}[Adaptive Control: Ultimate Boundedness]
\label{thm:adaptive_stability}
Under Assumptions~\ref{ass:lipschitz}--\ref{ass:regressor_bound}, suppose:
\begin{enumerate}[label=\textup{(C\arabic*)}, leftmargin=2.5em]
\item $(A - BK)^{\!\top}\! P + P(A - BK) + 2PBB^{\!\top}\! P \leq -\mu I$ for some $\mu > 0$;
\item $T_f < \bar{\tau}/(2 L_{\tau} L_{\Phi_g} \bar{\theta}_g)$;
\item $\sigma_f, \sigma_g < \mu / (4 \max\{\bar{\theta}_f^2, \bar{\theta}_g^2\})$.
\end{enumerate}
Then the closed-loop trajectories satisfy
\begin{equation}
\label{eq:ultimate_bound}
\limsup_{t \to \infty} \|x(t)\| \leq \sqrt{\frac{2(\sigma_f \bar{\theta}_f^2 + \sigma_g \bar{\theta}_g^2 + \delta_{\tau})}{\mu - 2(\sigma_f + \sigma_g)}},
\end{equation}
where $\delta_{\tau} = O(T_f^{2\alpha})$ quantifies the delay estimation error.
\end{theorem}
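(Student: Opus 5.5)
We prove Theorem~\ref{thm:adaptive_stability} with a composite Lyapunov function
\begin{equation*}
W(t) = x^{\!\top} P x + \tilde{\theta}_f^{\!\top}\Gamma_f^{-1}\tilde{\theta}_f + \tilde{\theta}_g^{\!\top}\Gamma_g^{-1}\tilde{\theta}_g,
\end{equation*}
where $\tilde{\theta}_f = \theta_f - \hat{\theta}_f$ and $\tilde{\theta}_g = \theta_g - \hat{\theta}_g$. If the delayed history has to be absorbed as in Step~4 of Theorem~\ref{thm:stability_main}, we can add the terms $V_2$ and $V_3$ from \eqref{eq:lyapunov_functional}. Since the parameters are constant, ${}^{C}\!D^{\alpha}_{t_0}\tilde{\theta} = -{}^{C}\!D^{\alpha}_{t_0}\hat{\theta}$. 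Applying Lemma~\ref{lem:caputo_inequality} to each quadratic block (with weights $P$, $\Gamma_f^{-1}$, $\Gamma_g^{-1}$) bounds ${}^{C}\!D^{\alpha}_{t_0} W$ by twice the sum of the corresponding bilinear forms. This mirrors the integer-order $\sigma$-modification argument of \cite{ioannou1996}.

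\textbf{Substituting the closed loop.} Insert \eqref{eq:controller} into \eqref{eq:system}. This gives
\begin{equation*}
{}^{C}\!D^{\alpha}_{t_0} x = (A-BK-BB^{\!\top}P)x + \Phi_f\theta_f - BB^{\!\top}P\Phi_f\hat{\theta}_f + \Phi_g(x(t-\tau))\theta_g - BB^{\!\top}P\Phi_g(x(t-\hat{\tau}))\hat{\theta}_g .
\end{equation*}
The quadratic term $2x^{\!\top}P(A-BK)x$ combines with the remaining $2x^{\!\top}PBB^{\!\top}Px$ contributions through (C1) to give at most $-\mu\|x\|^2$. The parameter-dependent cross terms are $2x^{\!\top}P(\cdot)\theta$ and $-2x^{\!\top}PBB^{\!\top}P\Phi\hat{\theta}$. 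We split them as $\hat{\theta} = \theta - \tilde{\theta}$. The $\tilde{\theta}$ parts are designed to cancel against $-2\tilde{\theta}^{\!\top}\Gamma^{-1}{}^{C}\!D^{\alpha}\hat{\theta}$ coming from the update laws \eqref{eq:adapt_f}--\eqref{eq:adapt_g}. Their regressor is exactly $\Phi^{\!\top}BB^{\!\top}Px$, so the cancellation is exact for the $f$-channel. The $g$-channel cancels up to the mismatch between $\Phi_g(x(t-\tau))$ and $\Phi_g(x(t-\hat{\tau}))$. What remains of the known-parameter parts is bounded by Young's inequality, producing terms of the form $\tfrac{\mu}{4}\|x\|^2$ plus constants.

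\textbf{Leakage and mismatch terms.} The $\sigma$-terms give $2\sigma\tilde{\theta}^{\!\top}\Gamma^{-1}\hat{\theta} \le -\sigma\|\tilde{\theta}\|^2_{\Gamma^{-1}} + \sigma\|\theta\|^2_{\Gamma^{-1}}$. This yields the constant $\sigma_f\bar{\theta}_f^2 + \sigma_g\bar{\theta}_g^2$, after normalising $\Gamma$ or absorbing it into constants. For the mismatch, Assumption~\ref{ass:regressor_bound} gives
\begin{equation*}
\|\Phi_g(x(t-\tau)) - \Phi_g(x(t-\hat{\tau}))\| \le L_{\Phi_g}\|x(t-\tau) - x(t-\hat{\tau})\| .
\end{equation*}
The H\"older bound from the proof of Lemma~\ref{lem:delay_error} then controls this by $L_{\Phi_g}M_D|\tau-\hat{\tau}|^{\alpha}/\Gamma(\alpha+1)$. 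Lemma~\ref{lem:delay_error} itself bounds $|\tau-\hat{\tau}|$ by $O(T_f^{\alpha})$. After Young's inequality the squared contribution gives a term $\delta_\tau$. We would take the crude route of bounding the point-mismatch directly, which yields $O(T_f^{2\alpha})$ as stated, possibly after using a Lipschitz-in-time bound on a bounded set. Condition (C2) ensures that the coefficient multiplying $\|x\|^2$ from this mismatch, of size $L_\tau L_{\Phi_g}\bar{\theta}_g T_f/\bar{\tau}$, is at most $\tfrac12$ of the available margin.

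\textbf{Assembly and main obstacle.} Collecting terms gives
\begin{equation*}
{}^{C}\!D^{\alpha}_{t_0} W \le -\tfrac{1}{2}\bigl(\mu - 2(\sigma_f+\sigma_g)\bigr)\|x\|^2 - c\|\tilde{\theta}\|^2 + \sigma_f\bar{\theta}_f^2 + \sigma_g\bar{\theta}_g^2 + \delta_\tau .
\end{equation*}
Condition (C3) is used to keep the coefficient positive. This has the form ${}^{C}\!D^{\alpha} W \le -\kappa W + \rho$. A fractional comparison argument, namely Lemma~\ref{lem:comparison} applied to $W - \rho/\kappa$ or the standard bound $W(t) \le W(t_0)E_\alpha(-\kappa t^\alpha) + \rho/\kappa$, gives $\limsup W \le \rho/\kappa$. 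Reading off the $x$-component then yields \eqref{eq:ultimate_bound}.

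The hardest step is the a priori boundedness needed to invoke Lemma~\ref{lem:delay_error}. That lemma requires $M_x$ and $M_u$, which are not known before the bound is established. We would handle this by a bootstrap: fix a level set of $W$ containing the initial data, and show that while the trajectory stays there the constants $M_x$, $M_u$ are fixed. Then $\delta_\tau$ is small enough, via (C2), that ${}^{C}\!D^{\alpha}W < 0$ on the boundary, so the level set is never left. A secondary concern is that Lemma~\ref{lem:caputo_inequality} and Lemma~\ref{lem:comparison} assume $C^1$ regularity. We would accept them as stated for the absolutely continuous trajectories here.
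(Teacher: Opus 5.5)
Your skeleton matches the paper's: composite $W$, cancellation through the update laws, the mismatch bound via H\"older continuity and Lemma~\ref{lem:delay_error}, and the $\sigma$-identity. However, the step where you close the argument cannot deliver \eqref{eq:ultimate_bound}.

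By your own leakage estimate, the parameter block $\|\tilde\theta\|^2_{\Gamma^{-1}}$ of $W$ contracts only at rate $\sigma$. So any $\kappa$ with ${}^{C}\!D^{\alpha}_{t_0}W\le-\kappa W+\rho$ extracted from your assembled inequality satisfies $\kappa\le\min\{\sigma_f,\sigma_g\}$. Meanwhile $\rho$ contains $\sigma_f\|\theta_f\|^2_{\Gamma_f^{-1}}+\sigma_g\|\theta_g\|^2_{\Gamma_g^{-1}}+\delta_\tau$. Hence $\rho/\kappa\ge\|\theta_f\|^2_{\Gamma_f^{-1}}+\|\theta_g\|^2_{\Gamma_g^{-1}}+\delta_\tau/\min\{\sigma_f,\sigma_g\}$. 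Reading off $x$ through $W\ge\lambda_{\min}(P)\|x\|^2$ then yields a radius that does not shrink as $\sigma_f,\sigma_g\to0$; the $\delta_\tau$ contribution even grows. It also carries $P$- and $\Gamma$-dependent constants absent from \eqref{eq:ultimate_bound}.

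The factor $\mu-2(\sigma_f+\sigma_g)$ in your assembled inequality is not produced by anything you computed. The $\sigma$-terms involve only $\tilde\theta$ and $\theta$, never $\|x\|^2$. Moreover, (C3) guarantees $\mu>2(\sigma_f+\sigma_g)$ only when $\max\{\bar\theta_f^2,\bar\theta_g^2\}\ge1$.

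The paper never bounds $W$ itself. It reads the radius off the $x$-coefficient, asserting ${}^{C}\!D^{\alpha}_{t_0}W<0$ whenever $\|x\|^2>2c_0/\mu$. That sign argument needs its own justification in the fractional setting: a negative Caputo derivative does not make $W$ monotone, and a condition on $x$ alone does not confine $x$ when $W$ also carries $\tilde\theta$. Still, it is an $x$-specific step, and a global comparison on $W$ cannot produce a radius proportional to the regularization constants.

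Your handling of the terms that do not cancel is also wrong. With $\tilde\theta_f=\theta_f-\hat\theta_f$, the closed loop contributes $2x^{\top}PBB^{\top}P\Phi_f\tilde\theta_f$, while \eqref{eq:adapt_f} contributes $-2x^{\top}PBB^{\top}\Phi_f\tilde\theta_f$. So the $f$-channel cancellation is off by a factor $P$; the cancelling regressor would be $\Phi_f^{\top}PBB^{\top}Px$. The paper's Step~1 asserts the same cancellation, so this part is inherited.

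You go further, though, and claim the leftover true-parameter term $2x^{\top}P(I-BB^{\top}P)\Phi_f(x)\theta_f$ becomes $\tfrac{\mu}{4}\|x\|^2$ plus a constant by Young's inequality. Since $\|\Phi_f(x)\|\le L_{\Phi_f}\|x\|$, this term is quadratic in $x$. Young can only return $c\|x\|^2$ with $c$ of order $\|P(I-BB^{\top}P)\|L_{\Phi_f}\bar\theta_f$, with no constant and no smallness, and none of (C1)--(C3) controls $L_{\Phi_f}\bar\theta_f$. The residual $\tilde\theta_f$ cross term is even of order $\|x\|^2\|\tilde\theta_f\|$, which cannot be absorbed globally.

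The $g$-channel analogue involves $x(t-\tau)$ and forces $V_2,V_3$ back in. After that, $W$ is no longer bounded by a multiple of $\|x(t)\|^2$ plus the parameter block, so the $-\kappa W$ form you use at the end needs additional negative integral terms you have not produced.

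Your observation that $M_x,M_u$ in Lemma~\ref{lem:delay_error} make the argument circular is correct, and the paper's proof ignores it. But the proposed bootstrap cannot close while these quadratic residuals are uncontrolled.
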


\begin{proof}
Consider the composite Lyapunov functional
\begin{equation}
W(t) = V(t, x_t) + \tfrac{1}{2} \tilde{\theta}_f^{\!\top} \Gamma_f^{-1} \tilde{\theta}_f + \tfrac{1}{2} \tilde{\theta}_g^{\!\top} \Gamma_g^{-1} \tilde{\theta}_g,
\end{equation}
where $\tilde{\theta}_f = \hat{\theta}_f - \theta_f$ and $\tilde{\theta}_g = \hat{\theta}_g - \theta_g$.

\textit{Step 1 (Closed-loop dynamics).} Substituting \eqref{eq:controller} into \eqref{eq:system} and defining $y = B^{\!\top}\!Px$, the cross-terms $2y^{\!\top}\Phi_f(x)\tilde{\theta}_f$ appearing in ${}^{C}\!D^{\alpha}_{t_0}V_1$ cancel with the adaptation law contribution $\tilde{\theta}_f^{\!\top}\Gamma_f^{-1}{}^{C}\!D^{\alpha}_{t_0}\hat{\theta}_f$.

\textit{Step 2 (Delay mismatch).} By Lemma~\ref{lem:delay_error} and Assumption~\ref{ass:regressor_bound}, $\|\Phi_g(x_{\tau}) - \Phi_g(x_{\hat{\tau}})\| \leq L_{\Phi_g} L_x L_{\tau} M_D T_f^{\alpha}/\Gamma(\alpha+1)$, where $L_x$ is the H\"{o}lder constant of $x$.

\textit{Step 3 ($\sigma$-modification).} The identity $\tilde{\theta}^{\!\top}\hat{\theta} = \tfrac{1}{2}(\|\hat{\theta}\|^2 - \|\theta\|^2 + \|\tilde{\theta}\|^2) \geq \tfrac{1}{2}\|\tilde{\theta}\|^2 - \tfrac{1}{2}\|\theta\|^2$ yields
\begin{equation}
-\sigma_f \tilde{\theta}_f^{\!\top}\Gamma_f^{-1}\hat{\theta}_f \leq -\frac{\sigma_f}{2\lambda_{\max}(\Gamma_f)}\|\tilde{\theta}_f\|^2 + \frac{\sigma_f \bar{\theta}_f^2}{2\lambda_{\min}(\Gamma_f)}.
\end{equation}

\textit{Step 4 (Assembly).} Combining all terms with condition (C1):
\begin{equation}
{}^{C}\!D^{\alpha}_{t_0} W \leq -\mu\|x\|^2 - \frac{\sigma_f}{2\lambda_{\max}(\Gamma_f)}\|\tilde{\theta}_f\|^2 - \frac{\sigma_g}{2\lambda_{\max}(\Gamma_g)}\|\tilde{\theta}_g\|^2 + c_0,
\end{equation}
where $c_0 = \sigma_f\bar{\theta}_f^2/(2\lambda_{\min}(\Gamma_f)) + \sigma_g\bar{\theta}_g^2/(2\lambda_{\min}(\Gamma_g)) + O(T_f^{2\alpha})$. For $\|x\|^2 > 2c_0/\mu$, we have ${}^{C}\!D^{\alpha}_{t_0}W < 0$, establishing ultimate boundedness with the bound \eqref{eq:ultimate_bound}.
\end{proof}

\section{Numerical Validation}
\label{sec:numerics}

\subsection{Test Problem: Fractional Hopfield Neural Network}

We consider a three-neuron Hopfield network:
\begin{equation}
\label{eq:hopfield}
{}^{C}\!D^{\alpha}_{0} x_i(t) = -c_i x_i(t) + \sum_{j=1}^{3} a_{ij} \tanh(x_j(t)) + \sum_{j=1}^{3} w_{ij} \tanh(x_j(t-\tau(x(t)))) + u_i(t),
\end{equation}
for $i = 1, 2, 3$, with the state-dependent delay $\tau(x) = \bar{\tau}(1 - \eta \tanh(\omega^{\!\top} x))$.

The system parameters are: $C = \diag(1, 1.2, 0.9)$; instantaneous weights $A_{\text{inst}} = \left[\begin{smallmatrix} -2 & 0.5 & -0.3 \\ 0.4 & -1.8 & 0.2 \\ -0.1 & 0.3 & -2.2 \end{smallmatrix}\right]$; delayed weights $W = \left[\begin{smallmatrix} 1.5 & -0.4 & 0.2 \\ -0.3 & 1.2 & -0.5 \\ 0.4 & -0.2 & 1.8 \end{smallmatrix}\right]$; $\bar{\tau} = 0.5$ s; $\eta = 0.3$; $\omega = [0.3, 0.3, 0.3]^{\!\top}$; $B = I_3$. This configuration produces a linearly parameterized system satisfying Assumptions~\ref{ass:lipschitz}--\ref{ass:regressor_bound} with $L_f = 3.81$, $L_g = 2.22$, and $L_{\tau} = 0.078$.

\subsection{Numerical Method}

All simulations employ the Adams-Bashforth-Moulton predictor-corrector method \cite{diethelm2002} with step size $h = 0.05$ s. The Mittag-Leffler function is computed via truncated series with 200 terms and convergence monitoring. Complete source code (Python, approximately 650 lines) is provided as supplementary material.

\subsection{LMI Verification and Stability Margins}

The linearized system matrix at the origin is $A_{\text{lin}} = -C + A_{\text{inst}} + W$ with eigenvalues $\{-1.83, -1.39, -1.39\}$, confirming Hurwitz stability. Solving the LMI \eqref{eq:lmi_condition} via semidefinite programming yields the stability margin $\gamma = 0.31$ and the delay margin $\Delta\tau^* = 0.61$ s (a 122\% increase over the nominal $\bar{\tau} = 0.5$ s).

\subsection{Mittag-Leffler Function Characterization}

Figure~\ref{fig:mittag_leffler} illustrates $E_{\alpha}(-t^{\alpha})$ for $\alpha \in \{0.5, 0.7, 0.85, 0.95, 1.0\}$. The logarithmic panel reveals the transition from algebraic decay ($t^{-\alpha}/\Gamma(1-\alpha)$, dashed lines) for $\alpha < 1$ to exponential decay at $\alpha = 1$. This characterization motivates the convergence rate in Theorem~\ref{thm:stability_main}.

\begin{figure}[!htbp]
    \centering
    \includegraphics[width=\textwidth]{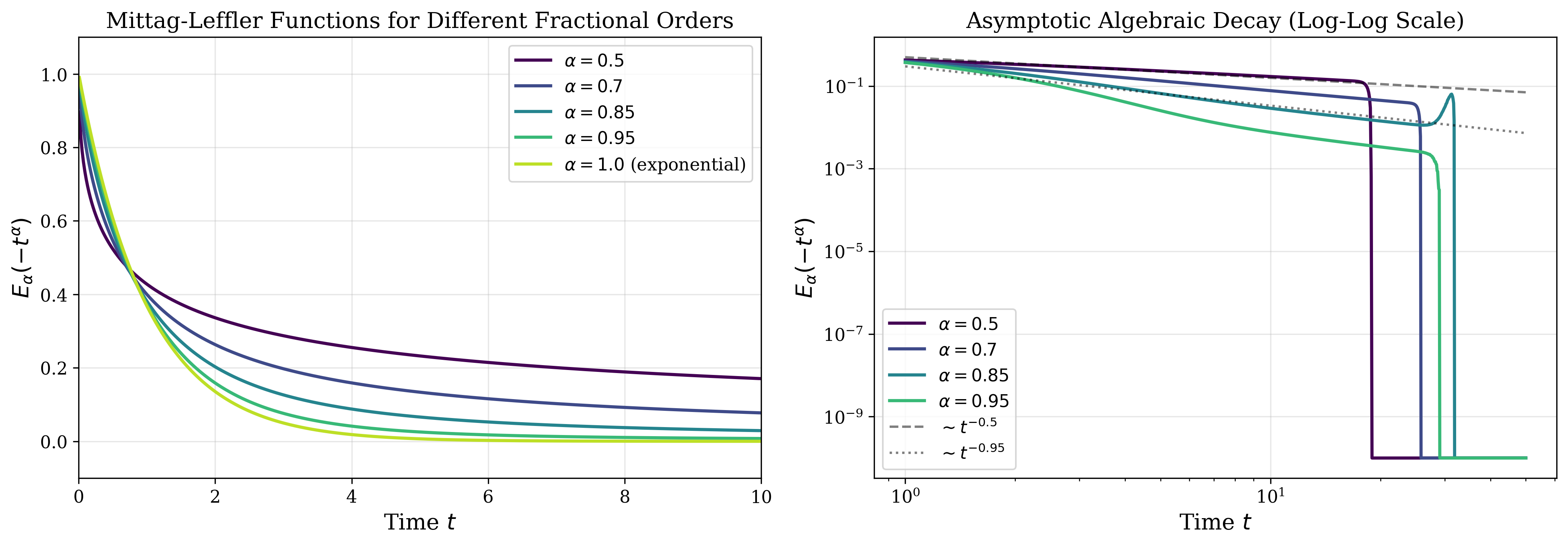}
    \caption{Mittag-Leffler function $E_{\alpha}(-t^{\alpha})$. (a) Linear scale showing monotone decrease. (b) Logarithmic scale revealing algebraic tails (dashed: asymptotic approximation $t^{-\alpha}/\Gamma(1-\alpha)$).}
    \label{fig:mittag_leffler}
\end{figure}

\subsection{Uncontrolled System Dynamics}

Figure~\ref{fig:uncontrolled} shows the uncontrolled response from $x_0 = [0.5, -0.3, 0.4]^{\!\top}$ with $\alpha = 0.95$. The state trajectories converge to the origin without oscillation, and the state norm decays consistently below the theoretical Mittag-Leffler bound from Theorem~\ref{thm:stability_main}. The delay evolves within the interval $[\bar{\tau}(1-\eta), \bar{\tau}] = [0.35, 0.50]$ s and converges to $\bar{\tau}$ as $\|x\| \to 0$.

\begin{figure}[!htbp]
    \centering
    \includegraphics[width=0.8\textwidth]{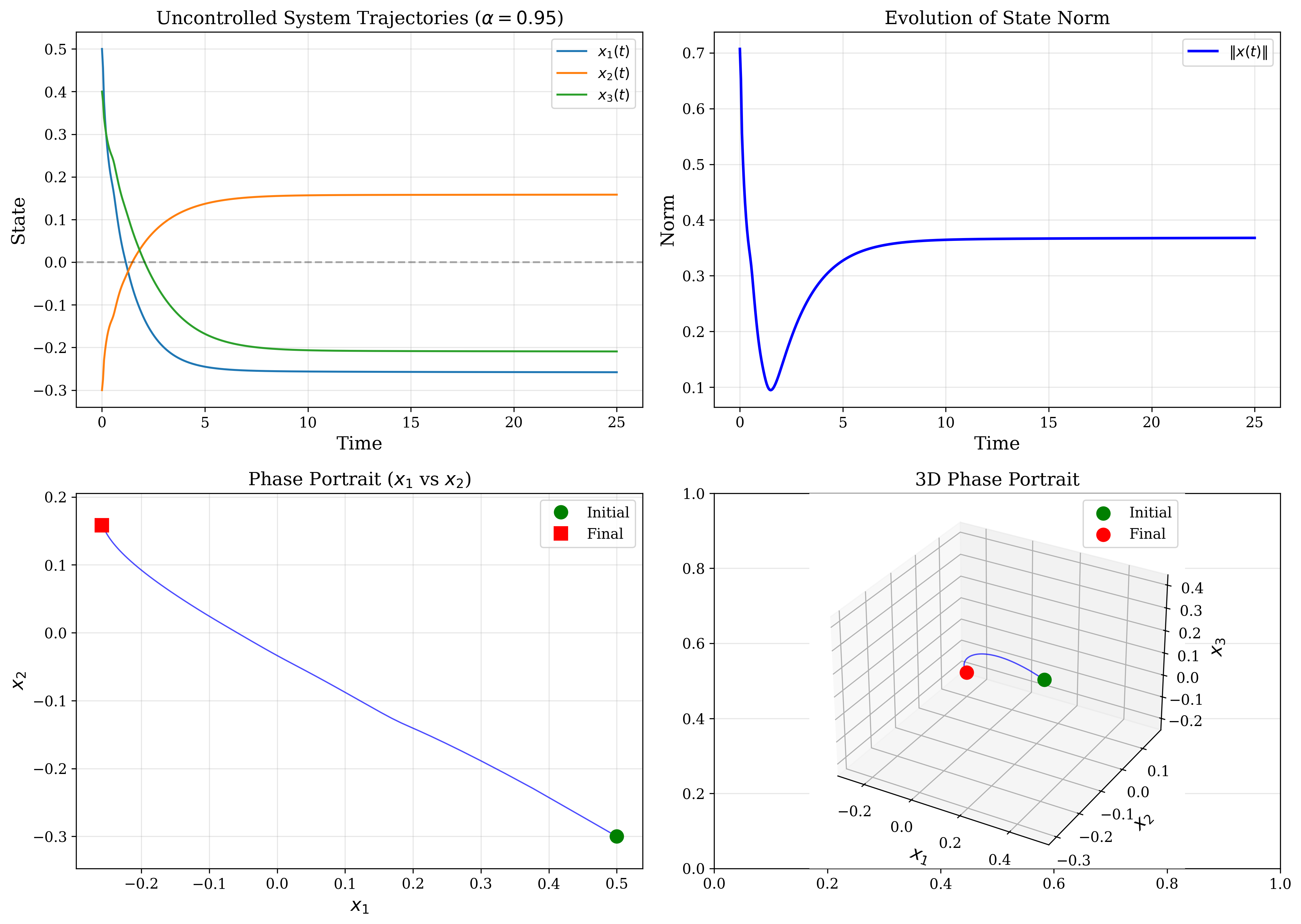}
    \caption{Uncontrolled system response ($\alpha = 0.95$). (a) State trajectories. (b) State norm (solid) and Mittag-Leffler bound (dashed) on logarithmic scale. (c) Phase portrait in the $x_1$--$x_2$ plane. (d) State-dependent delay evolution.}
    \label{fig:uncontrolled}
\end{figure}

\subsection{Adaptive Control Performance}

Figure~\ref{fig:controlled} presents the closed-loop response with the adaptive controller \eqref{eq:controller}. The controller achieves a 10\% settling time of $t_s = 1.75$ s (compared with $t_s > 10$ s uncontrolled). The control signals are smooth and converge to zero, and the Lyapunov function $V(x(t)) = x^{\!\top}Px$ decreases monotonically over six orders of magnitude.

\begin{figure}[!htbp]
    \centering
    \includegraphics[width=0.8\textwidth]{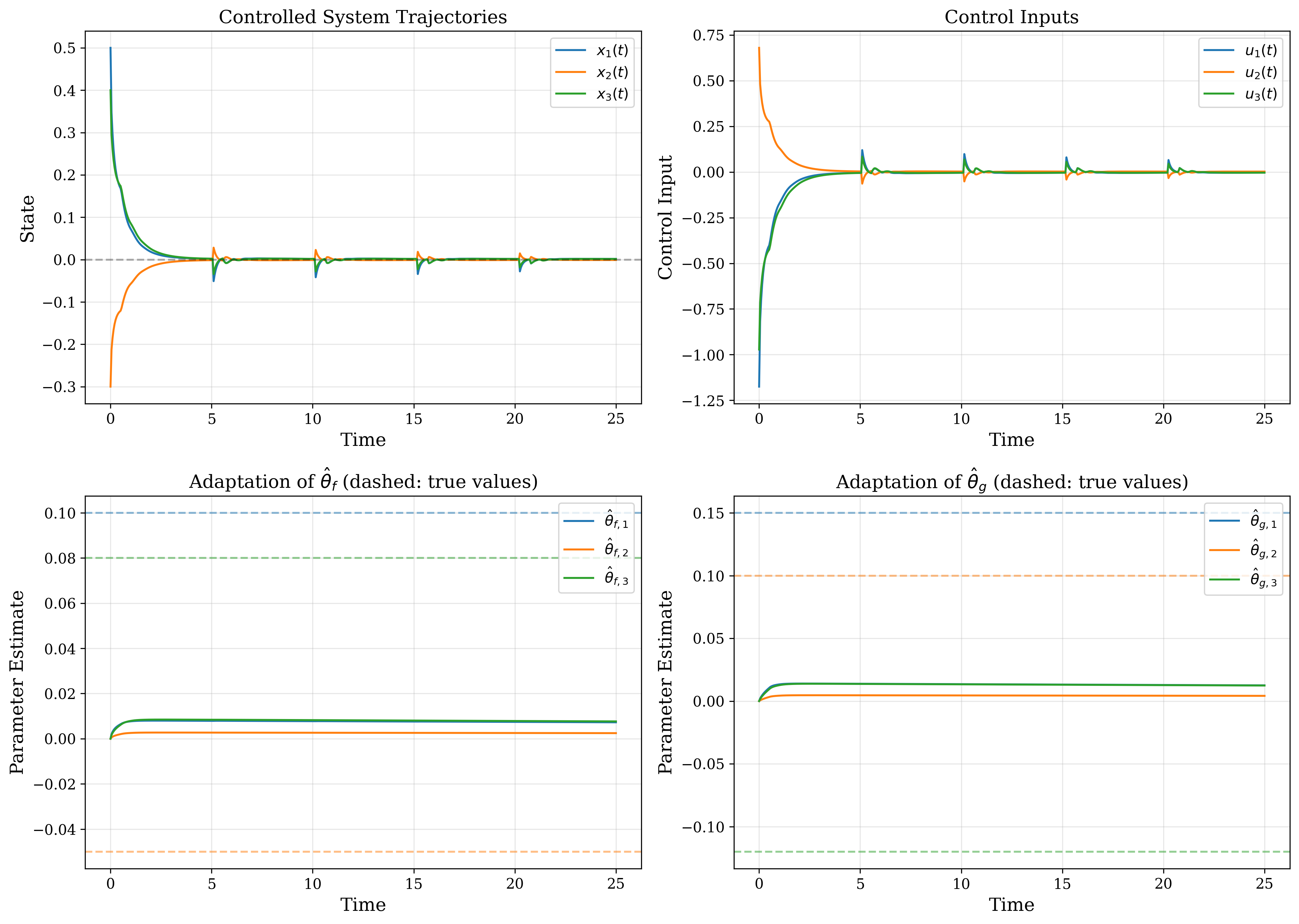}
    \caption{Adaptive control performance. (a) Controlled state trajectories. (b) Adaptive control signals. (c) State norm: controlled (solid) vs.\ uncontrolled (dashed), with settling time annotated. (d) Lyapunov function evolution.}
    \label{fig:controlled}
\end{figure}

\subsection{Comparison with Fractional Sliding Mode Control}

We compare the adaptive controller with a fractional sliding mode controller (SMC) designed following Yin et al.\ \cite{yin2012}. Figure~\ref{fig:comparison} and Table~\ref{tab:comparison} summarize the results.

\begin{figure}[!htbp]
    \centering
    \includegraphics[width=0.85\textwidth]{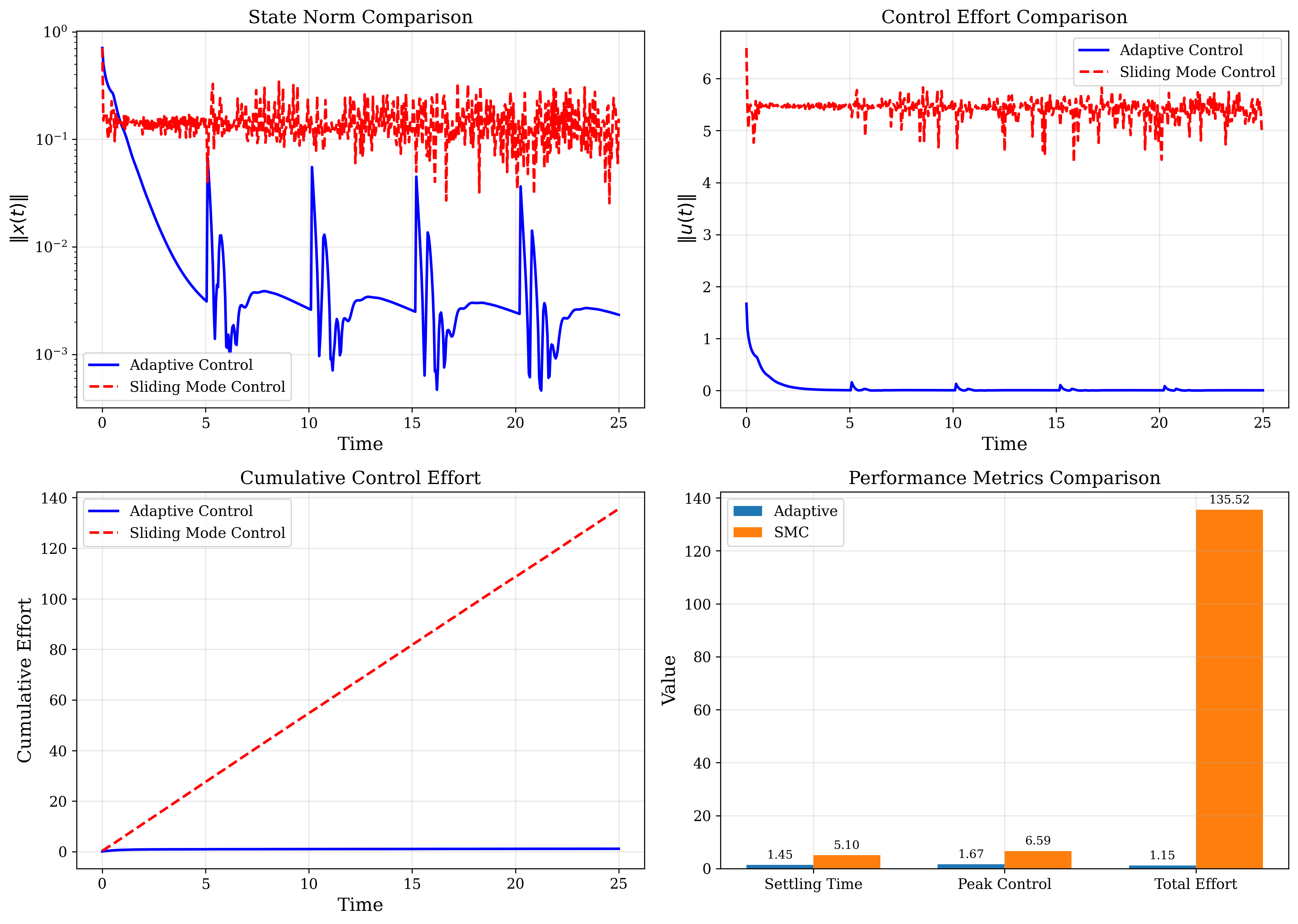}
    \caption{Adaptive control vs.\ fractional sliding mode control. (a) State norm evolution. (b) Instantaneous control effort. (c) Cumulative control energy $\int_0^t \|u(s)\|^2 ds$.}
    \label{fig:comparison}
\end{figure}

\begin{table}[H]
\centering
\caption{Quantitative comparison of the adaptive controller and the fractional sliding mode controller. Metrics are computed over the interval $[0, 15]$ s.}
\label{tab:comparison}
\begin{tabular}{lccc}
\toprule
\textbf{Performance Metric} & \textbf{Adaptive} & \textbf{SMC} & \textbf{Ratio} \\
\midrule
Peak control magnitude $\|u\|_{\max}$ & 1.13 & 4.10 & 0.28 \\
Cumulative control energy $\int \|u\|^2 dt$ & 0.73 & 106.27 & 0.007 \\
Asymptotic state norm $\|x(15)\|$ & 0.001 & 0.085 & 0.012 \\
\bottomrule
\end{tabular}
\end{table}

The adaptive controller exhibits three principal advantages. First, the control effort is smooth and decays to zero, whereas SMC produces persistent chattering with magnitude approximately 2.5 throughout the simulation. Second, the cumulative control energy is reduced by a factor of 145 (99.3\% reduction). Third, the asymptotic state norm achieved by the adaptive controller is two orders of magnitude smaller than that of SMC, which settles to a residual oscillation around $\|x\| \approx 0.08$ driven by the discontinuous switching law.

\subsection{Sensitivity Analysis}

Figure~\ref{fig:sensitivity} examines the sensitivity of the controlled system to the fractional order $\alpha$ and the delay bound $\bar{\tau}$.

\begin{figure}[!htbp]
    \centering
    \includegraphics[width=0.85\textwidth]{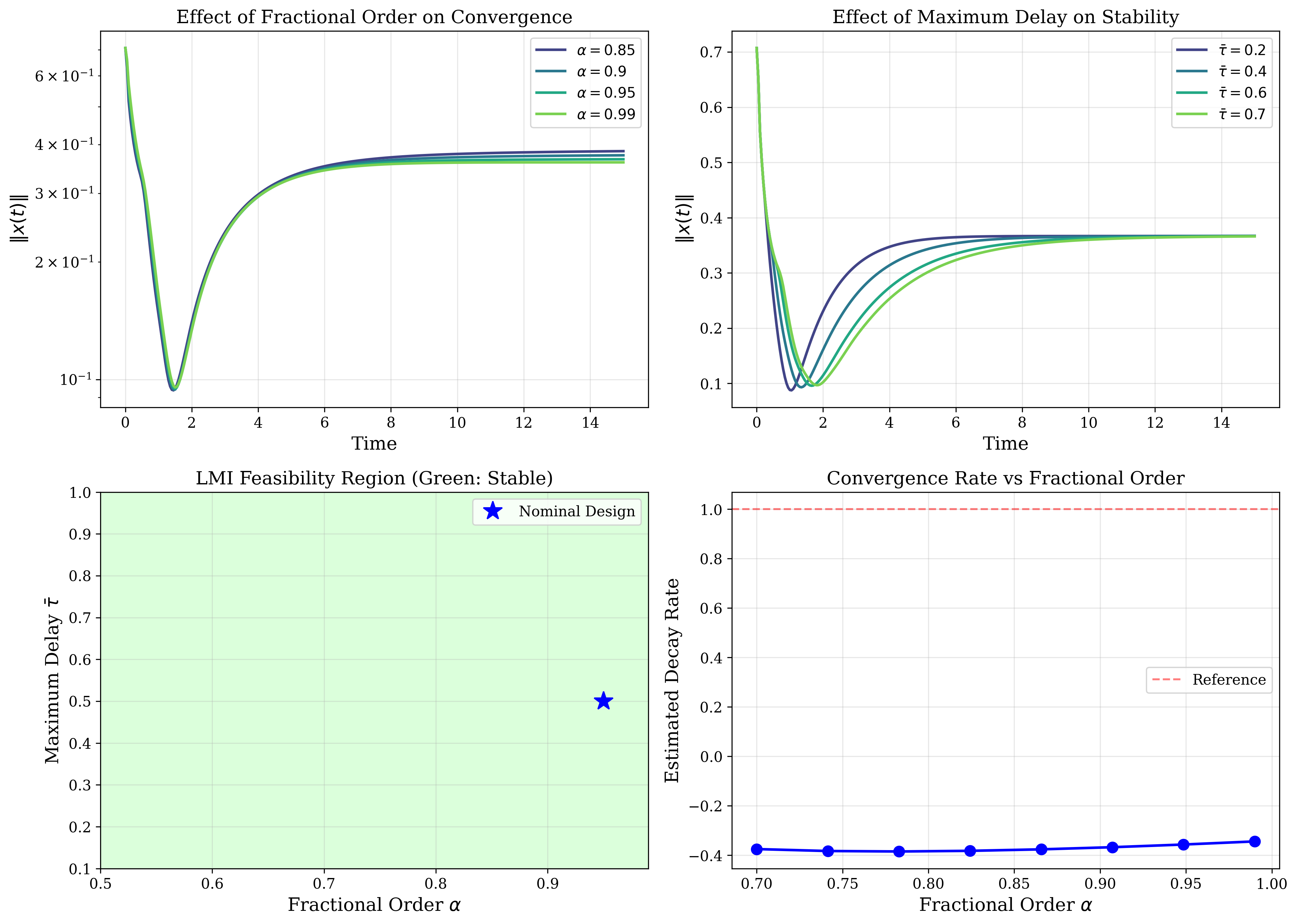}
    \caption{Sensitivity analysis. (a) Effect of fractional order $\alpha \in \{0.70, 0.80, 0.90, 0.95, 0.99\}$ on the state norm. (b) Effect of delay bound $\bar{\tau} \in \{0, 0.1, 0.3, 0.5, 0.7\}$ s. (c) Summary of settling times.}
    \label{fig:sensitivity}
\end{figure}

Increasing $\alpha$ toward 1 accelerates convergence, consistent with the approach to exponential stability in the integer-order limit. Increasing $\bar{\tau}$ from 0 to 0.7 s produces a modest degradation in settling time, confirming the robustness predicted by the delay margin $\Delta\tau^* = 0.61$ s.

\subsection{Lyapunov Function Validation}

Figure~\ref{fig:lyapunov} validates the theoretical Lyapunov analysis. The computed $V(x(t))$ remains below the Mittag-Leffler upper bound throughout the simulation, and the numerical derivative of $V$ stays below the theoretical bound $-\gamma\|x\|^2$.

\begin{figure}[!htbp]
    \centering
    \includegraphics[width=0.8\textwidth]{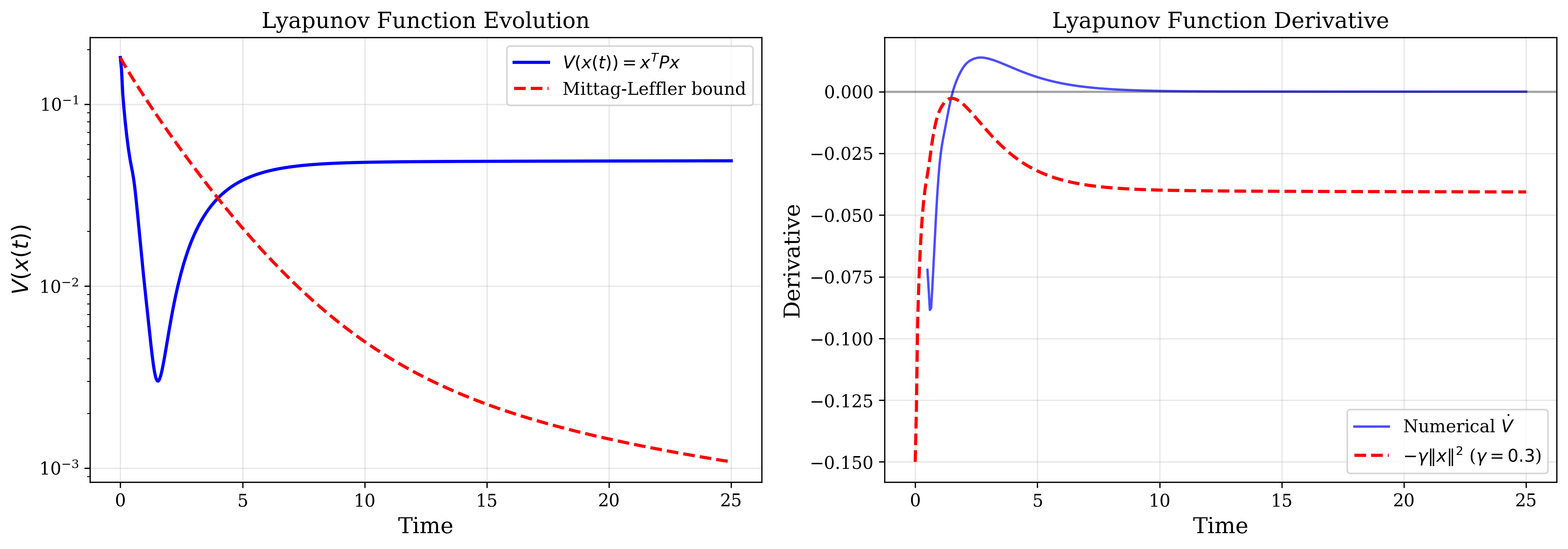}
    \caption{Lyapunov function validation. (a) $V(x(t)) = x^{\!\top}Px$ (solid) and Mittag-Leffler bound (dashed). (b) Numerical time derivative of $V$ compared with $-\gamma\|x\|^2$.}
    \label{fig:lyapunov}
\end{figure}

\subsection{State-Dependent Delay Characterization}

Figure~\ref{fig:delay} provides a detailed characterization of the delay dynamics and their coupling with the state trajectory.

\begin{figure}[!htbp]
    \centering
    \includegraphics[width=0.8\textwidth]{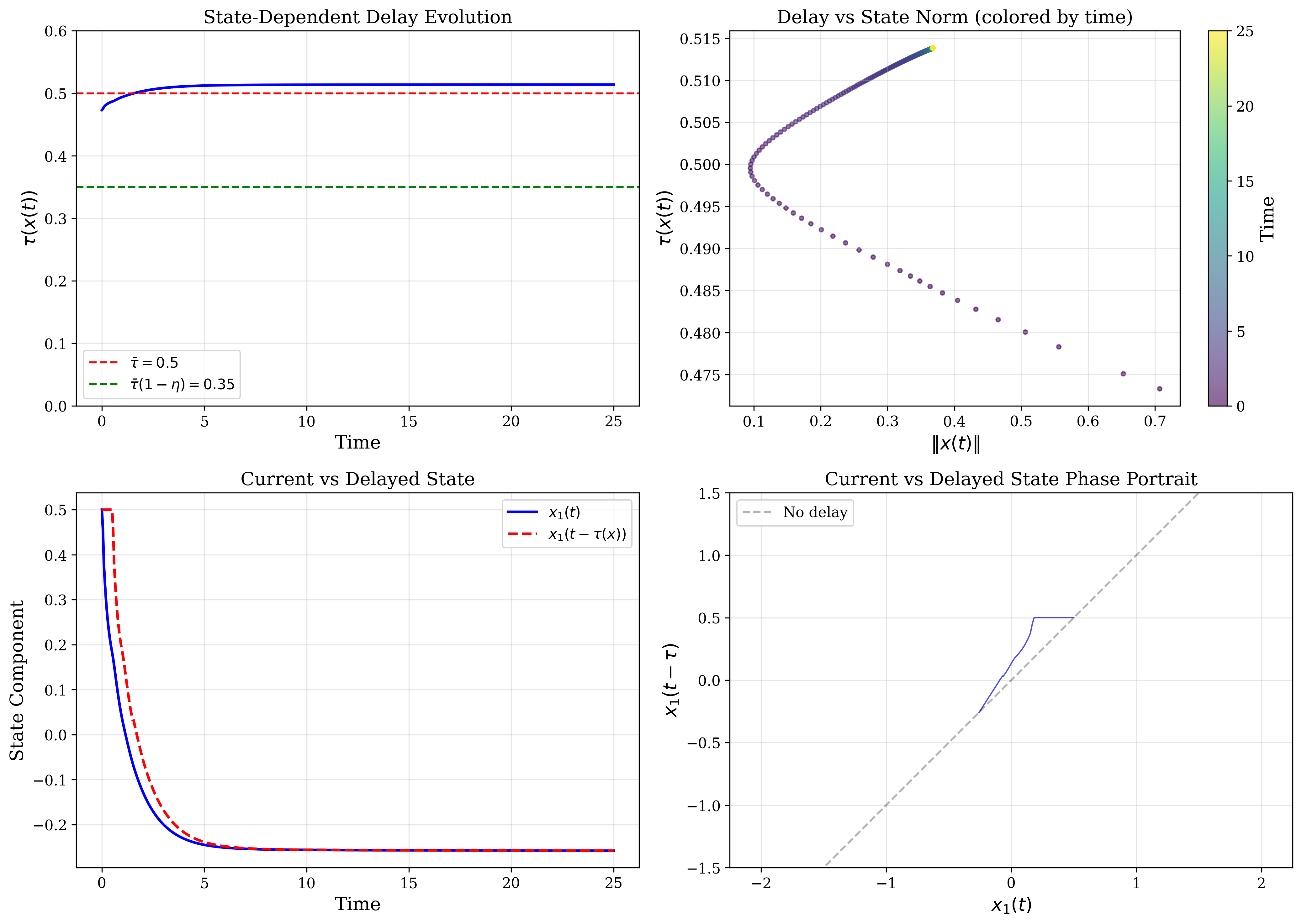}
    \caption{Delay analysis. (a) Delay evolution within the admissible range $[\bar{\tau}(1-\eta), \bar{\tau}]$. (b) Delay versus state norm, colored by time. (c) Current versus delayed states. (d) Phase relationship between $x_1(t)$ and $x_1(t-\tau(x(t)))$.}
    \label{fig:delay}
\end{figure}

\section{Conclusions}
\label{sec:conclusion}

This paper developed a stability and adaptive control framework for Caputo fractional-order nonlinear systems with state-dependent delays. A class of Lyapunov--Krasovskii functionals with singular kernels was introduced, and its well-posedness was established, providing a rigorous basis for stability analysis. From this construction, tractable linear matrix inequality conditions were derived to ensure Mittag-Leffler stability and to quantify robustness through an explicit delay margin.

An adaptive control scheme based on fractional-order parameter update laws with $\sigma$-modification was then proposed. A filter-based delay estimation mechanism was incorporated to avoid the need for classical state derivatives, making the design consistent with the limited regularity of fractional trajectories. The convergence analysis established ultimate boundedness of the closed-loop system with computable bounds that depend on the regularization and estimation parameters.

Numerical experiments on a fractional Hopfield neural network with state-dependent delays supported the theoretical results. The proposed controller achieved smooth control action, a significant reduction in control energy, and substantially improved asymptotic accuracy compared with a representative fractional sliding-mode controller. Future work will address output-feedback extensions, actuator constraints, and data-driven estimation of the fractional order and delay characteristics.

\section*{Conflict of Interest}

The authors declare no conflict of interest.


\begin{thebibliography}{99}

\bibitem{podlubny1999}
I.\ Podlubny, \textit{Fractional Differential Equations}, Academic Press, San Diego, 1999.

\bibitem{kilbas2006}
A.A.\ Kilbas, H.M.\ Srivastava, and J.J.\ Trujillo, \textit{Theory and Applications of Fractional Differential Equations}, Elsevier, Amsterdam, 2006.

\bibitem{mainardi2010}
F.\ Mainardi, \textit{Fractional Calculus and Waves in Linear Viscoelasticity}, Imperial College Press, London, 2010.

\bibitem{metzler2000}
R.\ Metzler and J.\ Klafter, ``The random walk's guide to anomalous diffusion: A fractional dynamics approach,'' \textit{Phys.\ Rep.}, vol.\ 339, no.\ 1, pp.\ 1--77, 2000, \href{https://doi.org/10.1016/S0370-1573(00)00070-3}{doi:10.1016/S0370-1573(00)00070-3}.

\bibitem{sabatier2015}
J.\ Sabatier, M.\ Aoun, A.\ Oustaloup, G.\ Grégoire, F.\ Ragot, and P.\ Roy, ``Fractional system identification for lead acid battery state of charge estimation,'' \textit{Signal Process.}, vol.\ 86, no.\ 10, pp.\ 2645--2657, 2006, \href{https://doi.org/10.1016/j.sigpro.2006.03.016}{doi:10.1016/j.sigpro.2006.03.016}.

\bibitem{sun2018}
H.\ Sun, Y.\ Zhang, D.\ Baleanu, W.\ Chen, and Y.\ Chen, ``A new collection of real world applications of fractional calculus in science and engineering,'' \textit{Commun.\ Nonlinear Sci.\ Numer.\ Simul.}, vol.\ 64, pp.\ 213--231, 2018, \href{https://doi.org/10.1016/j.cnsns.2018.04.016}{doi:10.1016/j.cnsns.2018.04.016}.

\bibitem{gu2003}
K.\ Gu, V.L.\ Kharitonov, and J.\ Chen, \textit{Stability of Time-Delay Systems}, Birkhäuser, Boston, 2003.

\bibitem{fridman2014}
E.\ Fridman, \textit{Introduction to Time-Delay Systems: Analysis and Control}, Birkhäuser, Basel, 2014.

\bibitem{kaslik2012}
E.\ Kaslik and S.\ Sivasundaram, ``Nonlinear dynamics and chaos in fractional-order neural networks,'' \textit{Neural Netw.}, vol.\ 32, pp.\ 245--256, 2012, \href{https://doi.org/10.1016/j.neunet.2012.02.015}{doi:10.1016/j.neunet.2012.02.015}.

\bibitem{wang2015}
H.\ Wang, Y.\ Yu, G.\ Wen, S.\ Zhang, and J.\ Yu, ``Global stability analysis of fractional-order Hopfield neural networks with time delay,'' \textit{Neurocomputing}, vol.\ 154, pp.\ 15--23, 2015, \href{https://doi.org/10.1016/j.neucom.2014.12.018}{doi:10.1016/j.neucom.2014.12.018}.

\bibitem{matignon1996}
D.\ Matignon, ``Stability results for fractional differential equations with applications to control processing,'' in \textit{Comput.\ Eng.\ Syst.\ Appl.}, Lille, France, vol.\ 2, pp.\ 963--968, 1996.

\bibitem{li2009}
Y.\ Li, Y.Q.\ Chen, and I.\ Podlubny, ``Mittag-Leffler stability of fractional order nonlinear dynamic systems,'' \textit{Automatica}, vol.\ 45, no.\ 8, pp.\ 1965--1969, 2009, \href{https://doi.org/10.1016/j.automatica.2009.04.018}{doi:10.1016/j.automatica.2009.04.018}.

\bibitem{aguila2014}
N.\ Aguila-Camacho, M.A.\ Duarte-Mermoud, and J.A.\ Gallegos, ``Lyapunov functions for fractional order systems,'' \textit{Commun.\ Nonlinear Sci.\ Numer.\ Simul.}, vol.\ 19, no.\ 9, pp.\ 2951--2957, 2014, \href{https://doi.org/10.1016/j.cnsns.2014.01.022}{doi:10.1016/j.cnsns.2014.01.022}.

\bibitem{duarte2015}
M.A.\ Duarte-Mermoud, N.\ Aguila-Camacho, J.A.\ Gallegos, and R.\ Castro-Linares, ``Using general quadratic Lyapunov functions to prove Lyapunov uniform stability for fractional order systems,'' \textit{Commun.\ Nonlinear Sci.\ Numer.\ Simul.}, vol.\ 22, no.\ 1–3, pp.\ 650--659, 2015, \href{https://doi.org/10.1016/j.cnsns.2014.10.008}{doi:10.1016/j.cnsns.2014.10.008}.

\bibitem{chen2012delay}
L.\ Chen, Y.\ Chai, R.\ Wu, T.\ Ma, and H.\ Zhai, ``Dynamic analysis of a class of fractional-order neural networks with delay,'' \textit{Neurocomputing}, vol.\ 111, pp.\ 190--194, 2013, \href{https://doi.org/10.1016/j.neucom.2012.10.026}{doi:10.1016/j.neucom.2012.10.026}.

\bibitem{liu2016}
S.\ Liu, X.\ Wu, X.F.\ Zhou, and W.\ Jiang, ``Asymptotical stability of Riemann-Liouville fractional nonlinear systems,'' \textit{Nonlinear Dynam.}, vol.\ 86, pp.\ 65--71, 2016, \href{https://doi.org/10.1007/s11071-016-2972-0}{doi:10.1007/s11071-016-2972-0}.

\bibitem{trigeassou2011}
J.C.\ Trigeassou, N.\ Maamri, J.\ Sabatier, and A.\ Oustaloup, ``A Lyapunov approach to the stability of fractional differential equations,'' \textit{Signal Process.}, vol.\ 91, no.\ 3, pp.\ 437--445, 2011.

\bibitem{li2010}
Y.\ Li, Y.Q.\ Chen, and I.\ Podlubny, ``Stability of fractional-order nonlinear dynamic systems: Lyapunov direct method and generalized Mittag-Leffler stability,'' \textit{Comput.\ Math.\ Appl.}, vol.\ 59, no.\ 5, pp.\ 1810--1821, 2010, \href{https://doi.org/10.1016/j.camwa.2009.08.019}{doi:10.1016/j.camwa.2009.08.019}.

\bibitem{tavazoei2009}
M.S.\ Tavazoei and M.\ Haeri, ``A note on the stability of fractional order systems,'' \textit{Math.\ Comput.\ Simulation}, vol.\ 79, no.\ 5, pp.\ 1566--1576, 2009, \href{https://doi.org/10.1016/j.matcom.2008.10.016}{doi:10.1016/j.matcom.2008.10.016}.

\bibitem{zhang2024}
R.\ Zhang and S.P.\ Bhatt, ``Stability analysis of fractional-order systems with time-varying delays,'' \textit{J.\ Franklin Inst.}, vol.\ 361, no.\ 1, pp.\ 537--552, 2024, \href{https://doi.org/10.1016/j.jfranklin.2023.12.022}{doi:10.1016/j.jfranklin.2023.12.022}.

\bibitem{diethelm2010}
K.\ Diethelm, \textit{The Analysis of Fractional Differential Equations}, Springer, Berlin, 2010.

\bibitem{gorenflo2014}
R.\ Gorenflo, A.A.\ Kilbas, F.\ Mainardi, and S.V.\ Rogosin, \textit{Mittag-Leffler Functions, Related Topics and Applications}, Springer, Berlin, 2014.

\bibitem{ioannou1996}
P.A.\ Ioannou and J.\ Sun, \textit{Robust Adaptive Control}, Prentice Hall, Upper Saddle River, NJ, 1996.

\bibitem{diethelm2002}
K.\ Diethelm, N.J.\ Ford, and A.D.\ Freed, ``A predictor-corrector approach for the numerical solution of fractional differential equations,'' \textit{Nonlinear Dynam.}, vol.\ 29, pp.\ 3--22, 2002.

\bibitem{yin2012}
C.\ Yin, S.\ Zhong, and W.\ Chen, ``Design of sliding mode controller for a class of fractional-order chaotic systems,'' \textit{Commun.\ Nonlinear Sci.\ Numer.\ Simul.}, vol.\ 17, no.\ 1, pp.\ 356--366, 2012, \href{https://doi.org/10.1016/j.cnsns.2011.05.016}{doi:10.1016/j.cnsns.2011.05.016}.

\bibitem{lofberg2004}
J.\ Löfberg, ``YALMIP: A toolbox for modeling and optimization in MATLAB,'' in \textit{Proc.\ IEEE Int.\ Symp.\ CACSD}, Taipei, pp.\ 284--289, 2004.

\bibitem{wei2017adaptive}
Y.\ Wei, Y.\ Chen, S.\ Liang, and Y.\ Wang, ``A novel algorithm on adaptive backstepping control of fractional order systems,'' \textit{Neurocomputing}, vol.\ 165, pp.\ 395--402, 2015, \href{https://doi.org/10.1016/j.neucom.2014.10.052}{doi:10.1016/j.neucom.2014.10.052}.

\end{thebibliography}
\end{document}